\providecommand{\abs}[1]{\ensuremath{{\left| #1 \right|}}}%
\providecommand{\norme}[1]{\ensuremath{{\left\lVert #1 \right\rVert}}}%
\def\R{\mathbb R}
\def\N{\mathbb N}
\newcommand{\ubar}{\overline{u}}
\newcommand{\diffcst}{\sigma ^{2}}
\let\oldsqrt\sqrt
\def\sqrt{\mathpalette\DHLhksqrt}
\def\DHLhksqrt#1#2{%
\setbox0=\hbox{$#1\oldsqrt{#2\,}$}\dimen0=\ht0
\advance\dimen0-0.2\ht0
\setbox2=\hbox{\vrule height\ht0 depth -\dimen0}%
{\box0\lower0.4pt\box2}}
\newtheorem{thm}{\textbf{Theorem}}[section]
\newtheorem{lem}[thm]{\textbf{Lemma}}
\newtheorem{prop}[thm]{\textbf{Proposition}}
\newtheorem{cor}[thm]{\textbf{Corollary}}
\newtheorem{defi}[thm]{\textbf{Definition}}
\theoremstyle{remark}
\newtheorem{rem}[thm]{\textbf{Remark}}
\numberwithin{equation}{section}
\title{\bf Density dependent replicator-mutator models in directed evolution}
\author{Matthieu Alfaro {\tiny and} Mario Veruete}
\address{\textsc{imag}, \emph{Universit\'e de Montpellier}, \textsc{cnrs}, Montpellier, France.}
\email{matthieu.alfaro@umontpellier.fr}
\email{mario.veruete@umontpellier.fr}
\date{\today}
\begin{document}
\maketitle

\begin{abstract} 
We analyze a replicator-mutator model arising in the context of directed evolution \cite{Zad-17}, where the selection term is modulated over time by the mean-f{}itness. We combine a Cumulant Generating Function approach~\cite{Gil-17} and a spatio-temporal rescaling related to the Avron-Herbst formula~\cite{Alf-Car-14} to give of a complete picture of the Cauchy problem. Besides its well-posedness, we provide an implicit/explicit expression of the solution, and analyze its large time behaviour. As a by product, we also solve a replicator-mutator model where the  mutation coef{}f{}icient is socially determined, in the sense that it is modulated by the mean-f{}itness. The latter model reveals concentration or anti dif{}fusion/dif{}fusion  phenomena.
\end{abstract}

\tableofcontents
\newpage

\section{Introduction}\label{s:introduction}

%Directed evolution has become the dominant approach to biological engineering, both in %academic laboratories and in the biotechnology industry. Those techniques are widely used  %for protein engineering (production of new enzymes and antibodies therapeutic proteins) %and studies of fundamental evolutionary principles in a laboratory environment.
%For instance, the 2018 chemistry Nobel prize was awarded to Arnold for pioneering the use %of directed evolution to engineer enzymes.

%Directed evolution selects  biological entities toward a user-def{}ined goal\,: those with desired traits are created through iterative rounds of genetic diversif{}ication (creating a library of variants) and library screening or selection (expressing those variants and isolating members with the desired function) and amplif{}ication (generating a template for the next round). From a biological perspective, the time scale of evolution depends on the nature of the entity in consideration,  e.g. \textsc{rna}-viruses, such as the human immunodef{}iciency virus (\textsc{hiv}) and the hepatitis \textsc{c} virus (\textsc{hcv}), presents both signif{}icant replication  ($\sim10^5$ {\small day}$^{-1}$) and  mutation  rates ($\sim10^{-4}$ {\small mutation/(nucleotide/replication)}); for this reason  \textsc{rna}-viruses  are often used in the experimental study of molecular evolution \emph{via} directed evolution techniques. 

%\medskip

In this paper, we analyze a mathematical model of a directed evolution process which is density-dependent. The model in consideration is derived in \cite{Zad-17} (see below for details) and is given by the following nonlocal equation
\begin{equation}\label{eq:principale}
\frac{\partial u}{\partial t}(t,x)=\diffcst \frac{\partial ^2u}{\partial x^2}(t,x) +u(t,x)\, \frac{x-\overline x(t)}{\overline x (t)}, \quad t>0,\; x\in \R,
\end{equation}
where the nonlocal term is given by
\begin{equation}\label{eq:premier-moment}
\overline x(t):=\int _\R x\, u(t,x)\,dx,
\end{equation}
and will also be denoted $\overline u(t)$, to remind its dependence on the solution itself. We will prove the well-posedness of the Cauchy problem associated with~\eqref{eq:principale}, obtain the solution through an implicit/explicit expression, and analyze its long time behaviour. 

As a by product of our analysis of~\eqref{eq:principale}, we will collect results on the well-posedness and the long time behaviour of the solution to the Cauchy problem associated with equation
\begin{equation}\label{eq:v-equation}
\frac{\partial v}{\partial t}(t,x)=\diffcst \overline{x}(t)\frac{\partial ^2v}{\partial x^2}(t,x) +v(t,x)(x-\overline{x}(t)), \quad t>0,\; x\in \R, \end{equation} where $\overline x (t):=\int_\R xv(t,x)\,dx$ will also be denoted $\overline v(t)$.

\medskip

Throughout this work, we assume that the initial data $u_0$ and $v_0$, are non-negative and have unitary mass \begin{equation}
\int _\R u_0(x)\,dx=\int _\R v_0(x)\,dx=1,
\end{equation} so that, \emph{formally}, $\int _\R u(t,x)\,dx=\int _\R v(t,x)\,dx=1$ for later times. Indeed, if we
formally integrate~\eqref{eq:principale} over $x\in \R$, we see that the total mass $M(t):=\int _\R u(t,x)\,dx$ solves the Cauchy problem
\begin{equation}\label{eq:ode-masse}
M'(t)=1-M(t), \quad M(0)=1,
\end{equation} so that, by the Cauchy-Lipschitz theorem,  $M(t)\equiv1$ for all $t>0$. Hence, $u(t,\cdot)$ is a probability density on $\R$ and $\overline x(t)=\overline u(t)$ is its mean. As far as~\eqref{eq:v-equation} is concerned, we reach
\begin{equation}\label{eq:v-ode-masse}
M'(t)=\overline v(t)(1-M(t)), \quad M(0)=1,
\end{equation} so that, by Gronwall's lemma, $M(t)=1$ as long as ``\,$\overline v(t)$ is meaningful\,''. We refer to~\cite{Alf-Car-14} for situations where, in some sense, $\overline v(t)$ blows up in f{}inite time, thus leading to extinction of the solution, which contradicts the formal conservation of the mass.

\medskip

Let us now comment on the emergence of equations~\eqref{eq:principale} and~\eqref{eq:v-equation}, termed as \emph{replicator-mutator} models,  in evolutionary biology and their relevance in  biotechnology.

Replicator-mutator models aim at describing Darwinian evolutionary processes,  whose fundamental elements are mutations and selection. Originally introduced by  Taylor and Jonker~\cite{TAYLOR1978145}, the replicator dynamics was developed for symmetric games in order to describe the evolution (determined by the payof{}f matrix) of the frequencies of strategies in a population, see~\cite{bookHofbauer} for a complete derivation.  Nevertheless, such an approach neglects the ef{}fect of mutations. As an attempt to f{}ill this gap, replicator-mutator models have been developed, starting with the work of Kimura~\cite{Kimura1965} and followed by models considering dif{}ferent types of mutations, both in the local~\cite{Biktashev2014}, \cite{Alf-Car-14,Alf-Car-17}, \cite{Alf-Ver-18} and nonlocal~\cite{Kimura1965}, \cite{Fle-79}, \cite{Bur-86, Bur-88-discret, Bur-88}, \cite{Gil-17,Gil-18} cases. 

It is important to mention the diversity of research areas where this type of model is applied:  population genetics ~\cite{Hadeler81}, game theory~\cite{Bomze95}, auto-catalytic reaction networks~\cite{Stadler1992} and language evolution~\cite{Nowak2001}.  As pointed out by Schuster and Sigmund~\cite{SCHUSTER1983533}, in the ordinary differential equation case, several evolutionary models in dif{}ferent biological f{}ields lead independently to the same class of replicator dynamics, showing some sort of universal structure; this idea is also developed in~\cite{PAGE200293}, where authors show how  apparently very dif{}ferent formulations of evolutionary dynamics are part of a single unif{}ied framework given by the replicator-mutator equation.

When mutations are modeled by the local diffusion operator,  and under the constrain of mass, the replicator-mutator equation  typically takes the form
\begin{equation}\label{modele}
    \frac{\partial u}{\partial t} = \underbrace{\diffcst \frac{\partial^2 u}{\partial x^2}}_{\text{mutations}} + \underbrace{u\left( \mathcal{W}(x) - \int_\R \mathcal{W}(y) u(t,y)\, dy\right)}_{\text{selection}}.
\end{equation}
In the context of evolutionary biology, $u(t,x)$ represents the density of population, at time $t$,  per unit of phenotypic trait  on a one-dimensional phenotypic trait space. The function $\mathcal{W}(x)$ stands for the f{}itness of the phenotype $x$ and models the individual reproductive success. Hence the nonlocal term $\overline{u}(t)=\overline{\mathcal{W}}(t)=\int_\R \mathcal{W}(y) u(t,y)\, dy$ is the mean f{}itness at time $t$, and can be seen as a Lagrange multiplier for the mass to be conserved, thus yielding an equation on the frequencies.

Due to their \emph{nonlocal} structure, the analysis of replicator-mutator equations often requires new approaches compared with local-\textsc{pde} (e.g. comparison principle does not hold), both from the analytic and numerical viewpoint. In the framework of~\eqref{modele}, we mention the works~\cite{Alf-Car-17}, \cite{Alf-Ver-18} for the cases of quadratic or conf{}ining  f{}itness functions, but now stick to the case  where the f{}itness linearly depends on the phenotypic trait, say $\mathcal W(x)=x$.

In this setting, the authors of~\cite{Alf-Car-14} proved that the solution of the replicator mutator Cauchy problem~\eqref{modele} with linear f{}itness can be written explicitly in terms of the solution of the Heat equation.  On the other hand, when a probability density (or kernel) $J$ models mutations, the equation is recast 
\begin{equation*}
\frac{\partial u}{\partial t}  =J* u-u +\,  u \left(x-\int _\R y\,u(t,y)\,dy\right),
\end{equation*}
for which a {\it Cumulant Generating Functions} (CGF) approach has been developed in~\cite{Gil-17}: it turns out that the CGF satisf{}ies a f{}irst order non-local partial differential equation that can be explicitly solved, thus giving access to many information such as  mean f{}itness (or mean trait since $\mathcal W(x)=x$), variance, position of the leading edge. In the present paper, we shall combine these two techniques to f{}irst reach an implicit expression of the mean f{}itness $\overline u(t)$ of the solution to~\eqref{eq:principale}, and then to obtain a so called {\it implicit/explicit} expression of the solution $u(t,x)$ itself. Additionally, this procedure allow us  to  develop  a simple  numerical  strategy  for  solving  the  Cauchy problem associated to~\eqref{eq:principale}.

In this work, and in contrast with~\eqref{modele}, we consider the case when the f{}itness function is modulated by $\overline{u}(t)=\overline x(t)$, the  mean-f{}itness at time $t$, as can be seen in \eqref{eq:principale}. Let us now make a short  description of the emergence of model \eqref{eq:principale} in \cite{Zad-17} to which we refer for further details.

The original  mutator model, yielding the aforementioned replicator-mutator model \eqref{modele}, is the continuous time model
\begin{equation}
\label{eq:mutator}
\frac{\partial u}{\partial t}(t,x)=(x-\overline x(t)) u(t,x),
\end{equation}
and considers the so-called {\it Malthusian fitness}, that is the rate of growth of a particular genotype.

On the other hand, in the case of non-overlapping generations, one may  start from the discrete time model 
\begin{equation}\label{discrete-time}
u(t+1,x)=\frac{xu(t,x)}{\overline x(t)}
\end{equation}
for the change in the so-called {\it Darwinian fitness} (success in propagating genes to the next generation) distribution. The Darwinian fitness being nonnegative, equation \eqref{discrete-time} is supplemented with $u(t=0,\cdot)$ supported in $[0,+\infty)$. % Notice also that, from generation one $t\geq 1$, $u(t,\cdot)$ becomes supported in $(0,+\infty)$, meaning immediate extinction of the $x=0$ individuals. 
Model \eqref{discrete-time} is immediately recast
\begin{equation}
\label{discrete-time2}
u(t+1,x)-u(t,x)=\frac{x-\overline x(t)}{\overline x(t)}u(t,x).
\end{equation}
Formally, at least for small times and narrow distributions, the above is approached by the continuous in time selection model
\begin{equation}
\label{approche}
\frac{\partial u}{\partial t}(t,x)=\frac{x-\overline x(t)}{\overline x(t)}u(t,x),
\end{equation}
which becomes \eqref{eq:principale} after inserting the effect of mutations through a diffusion term. Notice however that, in this derivation of \eqref{eq:principale}, the fitness of an individual with trait $x$ depends only on $x$ and thus selection is not density dependent.

In \cite{Zad-17}, the authors consider directed evolution, that is a laboratory technique that mimics natural evolution
and can be used for example to acquire proteins with new or improved properties. More precisely, they consider a  high-throughput compartmentalized  directed evolution process. Genotypes inside the compartments have dif{}ferent phenotypes. They not only pool their activity but also share the total number of produced copies, which makes the selection density dependent. In this process, the analogous of \eqref{discrete-time} is given by
\begin{equation}\label{discrete-time-bis}
u(t+1,x)=\frac{[g(l)x+(1-g(l))\overline x(t)]u(t,x)}{\overline x(t)}
\end{equation}
where the constant $g(+\infty)=0<g(l)<1=g(0)$ depends on $l$, a Poisson parameter measuring the occupancy of compartments. The analogous of \eqref{discrete-time2} is then
\begin{equation}
\label{discrete-time2-bis}
u(t+1,x)-u(t,x)=g(l)\frac{x-\overline x(t)}{\overline x(t)}u(t,x).
\end{equation}
Compared to \eqref{discrete-time2}, the presence of the coefficient $g(l)<1$ in \eqref{discrete-time2-bis} is the revelator of the density dependent selection. Now, if $g(l)$ is small (meaning $l$ large), the process is slowed down and the validity of \eqref{approche} as a continuous in time approximation should hold in much more cases than previously, meaning for larger time periods but also for more various shapes of distribution.

Hence, the compartmentalization that takes place in directed evolution (but also in natural systems like viruses with multiple infections) and the associated frequency dependence are the key tools to reach the continuous time model  \eqref{eq:principale}, starting from a discrete time model  written in terms of  the Darwinian fitness, see \cite{Zad-17}. 

Last, our second main focus, namely equation~\eqref{eq:v-equation}, corresponds to the replicator-mutator model \eqref{modele}, but with the additional effect that the mutations are frequency-dependent: the diffusion coef{}f{}icient is modulated by the mean trait $\overline u(t)=\overline x(t)$ and is thus \lq\lq socially determined''.

\section{Main results}\label{s:mainResults}

We here state our main results on~\eqref{eq:principale}, those  on~\eqref{eq:v-equation} will be gathered in the f{}inal section, where we transfer our developments on the solution $u$ of~\eqref{eq:principale} to the solution $v$ of~\eqref{eq:v-equation}.

\medskip

We f{}irst need to def{}ine an admissible set where to look after solutions. We denote by $\mathcal{A}$, the set of non-negative functions $f\in L^1(\R)$  such that\[\int _\R f(x)dx=1,\] 
which decrease faster than any exponential function, that is
\begin{equation}
    \label{eq:def-queues-legere}
    \lim _{x\to \pm \infty}f(x)\,e^{\alpha \vert x\vert}=0, \quad \forall \alpha>0,
\end{equation}
and, last, such that \[m_0:=\int _{\R} xf(x)dx >0.\]
\begin{rem}
Notice that the case $m_0<0$ follows by symmetry from the case $m_0>0$, whereas the case $m_0=0$ is singular, as clear from the equation and the Gaussian case studied in subsection~\ref{ss:gauss}. Notice also that assumption~\eqref{eq:def-queues-legere} could be relaxed by only assuming  limit $x\to +\infty$ when $m_0>0$ (or $x\to-\infty$ when $m_0<0$), the relevant tail being the right one as already observed in related situations by~\cite{Alf-Car-14} or~\cite{Gil-17}. This would require to adapt $(iii)$ in the below def{}inition by adding some integrability properties as $x\to -\infty$. For simplicity, in this work, we stick to~\eqref{eq:def-queues-legere}.
\end{rem}

\begin{defi}[Notion of solution]\label{def:sol}  Let $u_0\in \mathcal{A}$ be given. We say that $u=u(t,x)$ is a global solution of~\eqref{eq:principale} starting from $u_0$ if 
\begin{enumerate}[$(i)$]
    \item $u\in \mathscr C^{\infty}((0,+\infty)\times \R)$,
    \item for all $t\geq 0$, $u(t,\cdot)\in \mathcal{A}$,
    \item for all $t>0$, $\partial _t u(t,\cdot)$, $\partial _x u(t,\cdot)$ and $\partial _{xx}u(t,\cdot)$ decrease faster than any exponential function, in the sense of~\eqref{eq:def-queues-legere},
    \item $u$ solves~\eqref{eq:principale} in the classical sense,
    \item $u(t,\cdot)\to u_0$ in $L^1(\R)$, as $t\to 0$.
    \end{enumerate}
\end{defi}

Our main result consists in the well-posedness of the Cauchy problem~\eqref{eq:principale} with, moreover, an implicit/explicit expression of the solution.

\begin{thm}[The solution of the Cauchy problem]\label{th:main} Let $u_0\in \mathcal{A}$ be given. Then there is a unique global solution of~\eqref{eq:principale} starting from $u_0$ (in the sense of Def{}inition~\ref{def:sol}). Moreover, its mean $\overline u(t)$ is implicitly (and uniquely) determined by \[\overline u(t)=\sqrt{m_0^2+2\sigma^2t^2+2\int _0^t{C_0}''\left(\int_0^s\frac{dy}{\overline u(y)}\right)ds}\] where $m_0=\int_\R xu_0(x)dx>0$ and $C_0(z):=\ln\left( \int _\R u_0(x)e^{zx}dx\right)$, $z\geq 0$, is the cumulant generating function (\textsc{cgf}) of $u_0$. Last, $u(t,x)$ is given by \[ u(t,x) = w\left(t,x + 2\! \int_0^t\int_0^s \frac{\diffcst}{\overline u(\tau)} d\tau ds\right) \exp\left(-t+x\int_0^t  \frac{ ds }{\overline u(s)}
 +\int_0^t\!\! \diffcst \left(\int_0^s \frac{d\tau}{\overline u(\tau)} \right)^2 \! ds\right),\]
 where $w=w(t,y)$ is the solution of the Heat equation $\partial _t w=\sigma ^2\partial _{yy}w$ starting from $u_0$.
\end{thm}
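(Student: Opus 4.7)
The plan is to combine the two techniques announced in the introduction—an Avron-Herbst-type change of unknown and a cumulant generating function (\textsc{cgf}) analysis—the first of which delivers the explicit expression for $u$ once $\overline u(t)$ is known, while the second yields an implicit equation for $\overline u(t)$. To implement the first, I would seek $u$ under the form
\[
u(t,x)=w\bigl(t,x+\Phi(t)\bigr)\exp\bigl(A(t)+B(t)x\bigr),
\]
where $w$ solves the heat equation $\partial_t w=\diffcst\partial_{yy}w$ and $A,B,\Phi$ are scalar functions to be determined. Inserting this ansatz into~\eqref{eq:principale}, the coefficient of $\partial_y w$ cancels provided $\Phi'(t)=2\diffcst B(t)$, while matching the linear and constant parts in $x$ forces $B'(t)=1/\overline u(t)$ and $A'(t)=-1+\diffcst B(t)^2$. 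Together with $A(0)=B(0)=\Phi(0)=0$ this completely prescribes $A,B,\Phi$ as iterated integrals of $g(t):=\int_0^t ds/\overline u(s)$, forces $w(0,\cdot)=u_0$, and delivers the implicit/explicit formula for $u(t,x)$ stated in the theorem.

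For the second technique, multiplying~\eqref{eq:principale} by $e^{zx}$ and integrating over $\R$—the boundary contributions in the two integrations by parts vanishing thanks to Definition~\ref{def:sol}.$(iii)$—one checks that $C(t,z):=\ln\int_\R u(t,x)e^{zx}dx$ satisfies the first-order non-local equation
\[
\partial_t C(t,z) = \diffcst z^2 + \frac{\partial_z C(t,z)}{\overline u(t)} - 1,\qquad C(0,z)=C_0(z).
\]
This is solved by characteristics: along $z(s)=z_0-\int_0^s d\tau/\overline u(\tau)$ the equation reduces to $dC/ds=\diffcst z(s)^2-1$, giving $C(t,z)$ explicitly in terms of $C_0$ and $g$. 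Reading off $\overline u(t)=\partial_z C(t,0)$ and differentiating the transport equation once in $z$ at $z=0$ yields the coupled ODE system
\[
\overline u(t)\,\overline u'(t) = 2\diffcst\, t + C_0''(g(t)), \qquad g'(t)=\frac{1}{\overline u(t)}, \qquad \overline u(0)=m_0,\; g(0)=0,
\]
whose integration between $0$ and $t$ is exactly the implicit formula for $\overline u(t)$ written in the statement.

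Well-posedness of this ODE is classical: $C_0\in\mathscr{C}^\infty(\R)$ (thanks to~\eqref{eq:def-queues-legere}), so the right-hand side is locally Lipschitz and the system admits a unique maximal solution. Since $C_0''>0$ on $\R$ (as the second derivative of the \textsc{cgf} of a non-degenerate probability density), the relation $\overline u\,\overline u'=2\diffcst t+C_0''(g)>0$ gives $(\overline u^2)'>0$; hence $\overline u^2\ge m_0^2$ stays positive and $\overline u$ is strictly increasing, which yields the a priori bound $g(t)\le t/m_0$. This keeps $C_0''(g(t))$ bounded on compact intervals and rules out finite-time blow-up, so the solution is global. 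Mass conservation $C(t,0)\equiv 0$ is automatic, since the characteristic formula gives $\partial_t C(t,0)=\overline u(t)/\overline u(t)-1=0$. Admissibility $u(t,\cdot)\in\mathcal A$, smoothness, and the $L^1$ initial limit are then transferred to $u$ from the corresponding properties of the heat semigroup acting on $u_0$ through the explicit formula. Conversely, any candidate solution in the sense of Definition~\ref{def:sol} has a \textsc{cgf} satisfying the same transport equation, hence its mean solves the same ODE system and coincides with the constructed one, and uniqueness of $u$ follows from uniqueness of the inverse Laplace transform.

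The step I expect to be the most delicate is the propagation in time of the super-exponential decay~\eqref{eq:def-queues-legere} together with the analogous decay of $\partial_t u$, $\partial_x u$, $\partial_{xx}u$ required by Definition~\ref{def:sol}.$(iii)$. It is needed both to legitimate the two integrations by parts producing the \textsc{cgf} equation and to verify admissibility of the reconstructed $u$. It should be obtained from the Gaussian tails of the heat kernel convolved with the super-exponentially decaying $u_0$, combined with the explicit exponential weight in the formula, but requires a careful quantitative estimate uniform on compact time intervals.
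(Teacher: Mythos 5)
Your proposal is correct and follows essentially the same route as the paper: the \textsc{cgf} transport equation solved along characteristics yields the implicit equation for $\overline u(t)$, and the Avron--Herbst-type change of unknown reduces~\eqref{eq:principale} to the heat equation once $\overline u$ is known. The only difference is technical: where you establish well-posedness of $\overline u\,\overline u'=2\diffcst t+C_0''(g)$, $g'=1/\overline u$ via Cauchy--Lipschitz plus the a priori bounds $\overline u\ge m_0$, $g(t)\le t/m_0$, the paper runs a Picard iteration directly on the integral fixed-point equation~\eqref{eq:PointFixeEquation} (Lemma~\ref{lem:pointFixe}), which additionally yields the factorial convergence rate~\eqref{eq:propFactorial} used for the numerics --- the two arguments are equivalent in substance.
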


The proof is based in a combination of the approaches of~\cite{Gil-17} and~\cite{Alf-Car-14}. In the course of the proof, we collect the expressions and some estimates on the mean $\bar u(t)$ and the variance
\begin{eqnarray*}
V(t)&:=&\int_{\R} (x-\bar u(t))^{2}u(t,x)dx\\
&=&
\int_\R x^2u(t,x)dx-\left(\int_\R x u(t,x)dx\right)^{2}
\end{eqnarray*}
of the solution at time $t$.

\begin{cor}[Long time behaviour]\label{cor:long} Let $u_0\in \mathcal{A}$ be given. Then the mean $\overline u(t)$ of the global solution given by Theorem~\ref{th:main} satisf{}ies
\[\overline{u}(t) \begin{cases}\geq \sqrt{2\diffcst}\, t &\text{ in any case}\\
\sim \sqrt{2\diffcst}\, t &\text{ if } \sup {\rm supp}(u_0)<+\infty,
\end{cases}\] and, in any case,
\begin{equation}\label{eq:not-int}
\int _0 ^{+\infty}\frac{dy}{\overline u (y)}=+\infty.
\end{equation}
The variance $V(t)$  of the global solution satisf{}ies \[V(t)\begin{cases}\geq 2\diffcst t &\text{ in any case}\\
\sim 2\diffcst t &\text{ if } \sup {\rm supp}(u_0)<+\infty,\\
\end{cases}\] where ${\rm supp}(u_0)$ denotes the support of $u_0$. 
\end{cor}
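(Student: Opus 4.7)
The plan is to extract the corollary from the implicit formula for $\overline u(t)$ established in Theorem~\ref{th:main}, namely
\[
\overline u(t)^2 = m_0^2 + 2\diffcst\, t^2 + 2\int_0^t C_0''(I(s))\,ds,\qquad I(s):=\int_0^s \frac{dy}{\overline u(y)}.
\]
The key structural fact is that $C_0''(z)$ is the variance of the exponentially tilted probability density $e^{zx}u_0(x)/\!\int e^{zy}u_0(y)\,dy$, hence non-negative for every $z\geq 0$. This alone yields $\overline u(t)^2\geq m_0^2+2\diffcst\, t^2$, and thus the universal lower bound $\overline u(t)\geq \sqrt{2\diffcst}\,t$. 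To prove~\eqref{eq:not-int}, I would argue by contradiction: if $I_\infty:=\int_0^{+\infty} dy/\overline u(y)$ were finite, the continuous function $C_0''$ (smooth on $[0,+\infty)$ thanks to the light tails encoded in $\mathcal A$) would be bounded by some $K$ on the compact interval $[0,I_\infty]$; the implicit formula would then give $\overline u(t)^2\leq m_0^2+2\diffcst\, t^2+2Kt$, so that $\overline u(t)=O(t)$, forcing $\int^{\infty} dy/\overline u(y)=+\infty$, a contradiction.

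For the sharper asymptotic $\overline u(t)\sim\sqrt{2\diffcst}\,t$ under $M:=\sup\mathrm{supp}(u_0)<+\infty$, the heart of the matter is to show $C_0''(z)\to 0$ as $z\to+\infty$. This is a Laplace-type concentration argument: $u_0$ charges every left neighbourhood $(M-\delta,M]$ with some positive mass $c_\delta$, giving $\int e^{zx}u_0\geq c_\delta e^{z(M-\delta)}$, whereas $\int_{-\infty}^{M-2\delta} e^{zx}u_0\leq e^{z(M-2\delta)}$, so the tilted measure puts mass at most $e^{-\delta z}/c_\delta$ below $M-2\delta$; being supported in $(-\infty,M]$, its variance $C_0''(z)$ collapses to $0$. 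Since $I(t)\to+\infty$ by the previous paragraph, $C_0''(I(s))\to 0$ as $s\to\infty$, and a Cesàro argument gives $\int_0^t C_0''(I(s))\,ds=o(t)$, whence $\overline u(t)^2\sim 2\diffcst\, t^2$.

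For the variance $V(t)$ I would differentiate the implicit identity to obtain $\frac{d}{dt}\overline u(t)^2 = 4\diffcst\, t + 2C_0''(I(t))$. A direct moment computation on~\eqref{eq:principale} (integrations by parts in the diffusion term, expansion of the selection term) yields the elementary identity $\overline u'(t)=V(t)/\overline u(t)$, so that $\frac{d}{dt}\overline u(t)^2 = 2V(t)$; comparing produces the clean formula $V(t)=2\diffcst\, t+C_0''(I(t))$, from which $V(t)\geq 2\diffcst\, t$ is immediate and $V(t)\sim 2\diffcst\, t$ follows from $C_0''(I(t))\to 0$ in the bounded-support case. The step I expect to be the main obstacle is precisely this Laplace-type concentration $C_0''(z)\to 0$; everything else reduces to squeezing information out of the implicit formula together with a short first-moment computation on the PDE.
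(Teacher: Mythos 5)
Your argument is correct and is essentially the paper's: the corollary is read off from the implicit \textsc{cgf} identities of Section~\ref{s:cgf} (convexity of $C_0$ gives the two lower bounds, the relation $\frac{d}{dt}\overline u^2=2V$ links mean and variance, and~\eqref{eq:not-int} is proved by contradiction from the linear growth that a finite integral would force --- exactly as in the proof of Lemma~\ref{lem:ODE}, the paper merely running the contradiction through~\eqref{eq:implicit-ubar} rather than through~\eqref{eq:ubar-egal} as you do). The one place where your sketch is too thin is the Laplace concentration step: exponentially small tilted mass on $(-\infty,M-2\delta]$ together with support in $(-\infty,M]$ does \emph{not} by itself make the variance collapse, since $(x-M)^2$ is unbounded on the left tail; you must pair the tilting against the super-exponential decay of $u_0$ (e.g.\ $e^{zx}\le e^{(z-1)(M-2\delta)}e^{x}$ for $x\le M-2\delta$, $z\ge1$, so that $\int_{-\infty}^{M-2\delta}(x-M)^2e^{zx}u_0(x)\,dx=O\left(e^{(z-1)(M-2\delta)}\right)$) before dividing by $c_\delta e^{z(M-\delta)}$, after which $C_0''(z)\to0$ does follow. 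The paper simply quotes Lemma 4.5 of \cite{Gil-17} for this; note also that boundedness of $C_0''$ on $[0,+\infty)$ would already suffice for both asymptotics, since $O(1)=o(t)$ and $O(t)=o(t^2)$.
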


The above shows that, as $t\to+\infty$, the solution moves to the right since $\overline u(t)\to +\infty$, and is flattening since $V(t)\to +\infty$. In particular, for one side compactly supported initial data, the propagation (asymptotically) occurs at constant speed $\sqrt{2\diffcst}$, which is in contrast with the acceleration phenomenon proved in~\cite{Alf-Car-14}. This is also true for Gaussian data as will be noticed in subsection~\ref{ss:gauss}. The role of~\eqref{eq:not-int} is to provide \lq\lq a kind of upper bound'' on $\overline u(t)$ when, for example, the initial tails are still lighter than any exponential but heavier than Gaussian, e.g. of the magnitude $e^{-x^\alpha}$ ($1<\alpha<2$) or even $e^{-x \ln x}$ as $x\to +\infty$.

\medskip

The paper is organized as follows. In Section~\ref{s:special}, we investigate special solutions, in particular Gaussian ones which provide a preliminary understanding of equation~\eqref{eq:principale}. In Section~\ref{s:cgf}, we begin the proof of our main result, Theorem~\ref{th:main},  by using the \textsc{cgf} approach introduced in~\cite{Gil-17}. The proof of Theorem~\ref{th:main} and its corollary are completed in Section~\ref{s:explicit} thanks to the methodology of~\cite{Alf-Car-14}. As a by product of our analysis, we collect a numerical strategy for solving the Cauchy problem. Last, Section~\ref{s:v-section} is devoted to obtain the companion results on~\eqref{eq:v-equation} by expressing $v(t,x)$ in terms of $u(t,x)$.

\section{Special solutions}\label{s:special}

In this section, we investigate  special solutions to~\eqref{eq:principale}, in particular non-negative,  integrable steady states and Gaussian solutions. In the f{}irst case we prove the non existence of such steady states. This is due to the particular form of the f{}itness function and is an analogous result to that one obtained by Alfaro an Carles in~\cite{Alf-Car-14}. The situation is dif{}ferent in the case of a conf{}ining f{}itness function~\cite{Alf-Ver-18,Alf-Car-17} where the existence and uniqueness of a non-negative stationary solution is ensured and corresponds to the \emph{ground-state} of the Schr\"odinger Hamiltonian where the potential is the opposite of f{}itness function. In the second case, Gaussian solutions are computed explicitly by solving the dif{}ferential equations describing the evolution of the mean and the inverse of the variance.

\subsection{Steady states}\label{ss:steady}

\begin{prop}[Steady state]\label{prop:steady} 
There is no nontrivial non-negative steady state $\phi=\phi(x)$ solving~\eqref{eq:principale} and satisfying $\phi(\pm \infty)=0$.
\end{prop}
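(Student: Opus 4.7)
The plan is to reduce the problem to a linear second-order ODE and then contradict non-negativity via an elementary moment identity.

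First I would observe that because $\phi$ is stationary, the nonlocal quantity $\overline{x}(t)$ is a constant, say $\mu$. Mass integration of \eqref{eq:principale}, exactly as in the derivation of \eqref{eq:ode-masse} and using the vanishing of $\phi$ at $\pm\infty$, forces $\int_{\R} \phi(x)\,dx = 1$; in particular $\mu = \int_{\R} x\phi(x)\,dx$ is the mean of $\phi$, and for \eqref{eq:principale} to be meaningful one must have $\mu\neq 0$. Then \eqref{eq:principale} becomes the linear ODE
\[
\sigma^{2}\mu\,\phi''(x)+(x-\mu)\phi(x)=0,\qquad x\in\R.
\]

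Next I would multiply this ODE by $x$ and integrate over $\R$. Two integrations by parts combined with the decay of $\phi$, $\phi'$ and $x\phi'$ at $\pm\infty$ yield $\int_{\R}x\phi''(x)\,dx=0$, so the identity reduces to $\int_{\R}x^{2}\phi(x)\,dx=\mu^{2}$. Expanding the pointwise non-negative quantity $(x-\mu)^{2}\phi(x)$ and integrating gives
\[
\int_{\R}(x-\mu)^{2}\phi(x)\,dx=\int_{\R}x^{2}\phi\,dx-2\mu\int_{\R}x\phi\,dx+\mu^{2}\int_{\R}\phi\,dx=\mu^{2}-2\mu^{2}+\mu^{2}=0.
\]
Since both $\phi\geq 0$ and $(x-\mu)^{2}\geq 0$, this forces $\phi\equiv 0$ on $\R\setminus\{\mu\}$, hence $\phi\equiv 0$ as a continuous function, contradicting the nontriviality assumption.

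The only potential obstacle is the justification of the two integrations by parts, i.e., the vanishing of the boundary terms $[\phi]_{\pm\infty}$ and $[x\phi']_{\pm\infty}$. The former is given by hypothesis. For the latter, one bootstraps from the ODE itself: since $\phi''=-(x-\mu)\phi/(\sigma^{2}\mu)$ inherits fast decay at $\pm\infty$ from $\phi$ (the natural solution class being $\mathcal{A}$), integrating $\phi''$ back from $\pm\infty$ shows that $\phi'$, and then $x\phi'$, tend to $0$. Once this decay is in hand, the rest of the argument is a one-line computation with three linear moments.
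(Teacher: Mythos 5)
Your route is genuinely different from the paper's, and the core computation is sound: you derive the zero--variance identity $\int_\R(x-\mu)^2\phi(x)\,dx=0$ from two moment identities and conclude that a nontrivial continuous non-negative density cannot have vanishing variance. The paper instead rescales the stationary equation into Airy's equation $\psi''-x\psi=0$, discards the ${\rm Bi}$ component using the decay of $\psi$ at $+\infty$, and invokes the sign changes of ${\rm Ai}$ to contradict non-negativity. Your argument is more elementary (no special functions) and would adapt to other fitness shapes, whereas the paper's argument is sharper in its hypotheses: it uses only $\phi\geq 0$ and decay at a \emph{single} infinity, with no integrability requirement at all.

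That difference in hypotheses is where your proof has a gap, and it is exactly the point you flag yourself. The proposition assumes only $\phi(\pm\infty)=0$, yet your integrations by parts need $\int_\R x^2\phi(x)\,dx<+\infty$ and $x\phi'(x)\to 0$, and your justification imports the super-exponential decay of the class $\mathcal{A}$ --- a hypothesis absent from the statement. Moreover, as written the bootstrap is close to circular: to show $x\phi'(x)=x\int_x^{\pm\infty}\frac{s-\mu}{\sigma^{2}\mu}\,\phi(s)\,ds\to 0$ one essentially needs the finiteness of the second moment, which is part of what the moment identities are supposed to deliver. The gap can be closed without strengthening the hypotheses, for instance as follows: on the half-line where $(x-\mu)/\mu>0$ (namely $(\mu,+\infty)$ if $\mu>0$, $(-\infty,\mu)$ if $\mu<0$) the equation gives $\phi''\leq 0$, and a non-negative concave function on a half-line tending to $0$ at infinity must vanish identically there, whence $\phi\equiv 0$ everywhere by uniqueness for the linear second-order ODE. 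Some such repair is needed if you want the proposition under its stated (weak) hypotheses rather than only for steady states lying in $\mathcal{A}$.
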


\begin{proof} A steady state with $\overline x\neq 0$ must solve $\phi''(x)+\frac 1 \diffcst \frac{x-\overline x}{\overline x}\phi(x)=0$ for all $x\in \R$. Letting $\psi(x):=\phi(\overline x- {(\diffcst \overline x)}^{1/3}\, x)$, this is recast \[\psi''(x)-x\psi(x)=0, \quad x\in \R.\] Hence $\psi(x)$ is a linear combination of the Airy functions ${\rm Ai}(x)$ and ${\rm Bi}(x)$. From $\psi(+\infty)=0$, we deduce that $\psi(x)$ is a multiple of ${\rm Ai}(x)$. Hence either it is trivial, or it changes sign.
\end{proof}

\subsection{Gaussian solutions}\label{ss:gauss}

We  investigate the propagation of a Gaussian initial data, which is relevant for biological applications. In the context of evolutionary genetics, families of Gaussian solutions for nonlinear and nonlocal equations can be found in~\cite{Biktashev2014}, \cite{Alf-Car-14, Alf-Car-17}. In a dif{}ferent context involving logarithmic non-linearities, we also refer to~\cite{BiMy76}, \cite{CaGa-p} for
the Schr\"odinger equation and to~\cite{Alf-Car-18} for the Heat equation.
 
\begin{prop}[Propagation of Gaussian initial data]\label{prop:prop-gauss} For $a_0>0$ and $m_0\in \R$, let us def{}ine
\begin{equation}\label{eq:a-et-m}
a(t):=\frac {a_0}{1+2a_0\diffcst t}, \quad m(t):=\begin{cases}+\sqrt{2\diffcst t^{2}+\frac{2t}{a_0}+m_0^{2}}& \text{ when } m_0>0\\
\pm \sqrt{2\diffcst t^{2}+\frac{2t}{a_0}} & \text{ when } m_0=0\\
-\sqrt{2\diffcst t^{2}+\frac{2t}{a_0}+m_0^{2}} & \text{ when } m_0<0.
\end{cases} 
\end{equation}
Then
\begin{equation}
\label{eq:gauss-u}
u(t,x):=\sqrt{\frac {a(t)}{2\pi}}e^{-a(t)\frac{(x-m(t))^2}2}
\end{equation}
 solves~\eqref{eq:principale} in $(0,+\infty)\times \R$ and starts from $u_0(x)=\sqrt{\frac {a_0}{2\pi}}e^{-a_0\frac{(x-m_0)^2}2}$.
\end{prop}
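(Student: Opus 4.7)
The plan is to seek a solution in the Gaussian form
\[
u(t,x) = \sqrt{\frac{a(t)}{2\pi}}\, e^{-a(t)(x-m(t))^2/2},
\]
and reduce the PDE to a coupled system of ODEs for $a(t)$ and $m(t)$, which we can solve by quadrature.

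First I would verify that this ansatz has unit mass and mean $\overline u(t)=m(t)$, so the nonlocal coefficient $\frac{x-\overline u(t)}{\overline u(t)}$ becomes the local expression $\frac{x-m(t)}{m(t)}$. Next I would compute the derivatives:
\begin{align*}
\partial_t u &= \Big[\tfrac{a'(t)}{2a(t)} - \tfrac{a'(t)}{2}(x-m(t))^2 + a(t)m'(t)(x-m(t))\Big]\,u,\\
\partial_{xx} u &= \Big[-a(t) + a(t)^2(x-m(t))^2\Big]\,u.
\end{align*}
Plugging these into~\eqref{eq:principale} and dividing by $u$, I obtain an identity that is polynomial of degree two in $(x-m(t))$. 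Identifying coefficients of $(x-m(t))^0$, $(x-m(t))^1$, $(x-m(t))^2$ produces the system
\[
\frac{a'(t)}{2a(t)} = -\sigma^2 a(t),\qquad a(t)\,m'(t) = \frac{1}{m(t)},\qquad -\frac{a'(t)}{2}=\sigma^2 a(t)^2,
\]
where the first and third equations are the same, namely the Bernoulli equation $a'=-2\sigma^2 a^2$.

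Solving $a'=-2\sigma^2 a^2$ with $a(0)=a_0$ gives $1/a(t)=1/a_0+2\sigma^2 t$, hence the announced formula for $a(t)$. The second relation rewrites as $\tfrac12(m^2)'(t)=1/a(t)=1/a_0+2\sigma^2 t$, and integration yields
\[
m(t)^2 = m_0^2 + \frac{2t}{a_0}+2\sigma^2 t^2,
\]
which is the expression under the square root in~\eqref{eq:a-et-m}. The only remaining issue is the choice of sign for $m(t)$: because the equation becomes singular when $\overline u(t)=0$, when $m_0>0$ we are forced to pick the positive branch (and symmetrically the negative one when $m_0<0$), so that $m(t)$ never vanishes for $t>0$ and the trajectory stays smooth on $(0,+\infty)$.

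The main subtlety is the degenerate case $m_0=0$: here $m(t)^2$ vanishes at $t=0$, so $m'(t)\sim \pm (a_0 t)^{-1/2}$ explodes as $t\to 0^+$. Nevertheless for every $t>0$ we have $m(t)\neq 0$, and one checks directly that both signs $m(t)=\pm\sqrt{2\sigma^2 t^2+2t/a_0}$ give a smooth Gaussian solution on $(0,+\infty)\times\mathbb R$ whose initial trace (in $L^1(\mathbb R)$) is $u_0$ with $m_0=0$; this non-uniqueness is the singular feature alluded to in the preceding remark and is why we exclude $m_0=0$ from the admissible set $\mathcal A$.
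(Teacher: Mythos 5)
Your proposal is correct and follows essentially the same route as the paper: plug in the Gaussian ansatz, identify coefficients of the quadratic polynomial in $x-m(t)$ (the paper organizes by powers of $x$, which is equivalent) to get $a'=-2\diffcst a^2$ and $a\,m\,m'=1$, and integrate. Your additional remarks on the sign choice and on the two-branch degeneracy at $m_0=0$ are accurate and consistent with the remark following the proposition in the paper.
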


\begin{proof} {}From the ansatz~\eqref{eq:gauss-u}, we compute
\begin{eqnarray*}
\partial _t u(t,x)&=&u(t,x)\left[\frac{a'(t)}{2a(t)}-a'(t)\frac{(x-m(t))^{2}}{2}+a(t)m'(t)(x-m(t))\right]\\
\partial_{xx} u(t,x)&=&u(t,x)\left[-a(t)+a^{2}(t)(x-m(t))^2\right]\\
u(t,x)\frac{x-\overline x(t)}{\overline x(t)}&=&u(t,x)\frac{x-m(t)}{m(t)}.
\end{eqnarray*}
We plug the above into equation~\eqref{eq:principale}, and identify the $x^{2}$, the $x^{1}$ and the $x^0$ coef{}f{}icients to obtain three  dif{}ferential equations. The f{}irst one is
\begin{equation}\label{eq:eq-sur-a}
-\frac{a'(t)}2=\diffcst a^{2}(t), 
\end{equation}
whose solution, starting from $a_0$, is given by the f{}irst part in~\eqref{eq:a-et-m}. Using~\eqref{eq:eq-sur-a}, we see that the two other equations reduce to $a(t)m(t)m'(t)=1$, which is solved as \[m^{2}(t)=2\diffcst t^{2}+\frac{2t}{a_0}+m_0^{2},\] and thus the second part in~\eqref{eq:a-et-m}.
\end{proof}

Notice that functions $m(t)$ and $V(t)=\frac{1}{a(t)}$ are respectively the mean and the variance of the density $u(t,\cdot)$. Hence Proposition~\ref{prop:prop-gauss} shows that, starting from a Gaussian prof{}ile, the solution remains a Gaussian function, is asymptotically propagating at constant speed and flattening since
$m(t)\sim \pm \sqrt{2\diffcst}\, t$, $V(t)\sim 2\diffcst t$, as $t\to +\infty$, see Figure~\ref{fig:gauss-centree}. Notice that the direction of propagation is determined by the initial value $m_0$: towards the right when $m_0>0$, the left when $m_0<0$. The case $m_0=0$ is singular because of the multiplicity of solutions to~\eqref{eq:a-et-m}, two Gaussian solutions emerge, propagating to the left and to the right, see Figure~\ref{fig:gauss-centree}.

\begin{figure}[ht]
  \includegraphics[width=1\textwidth]{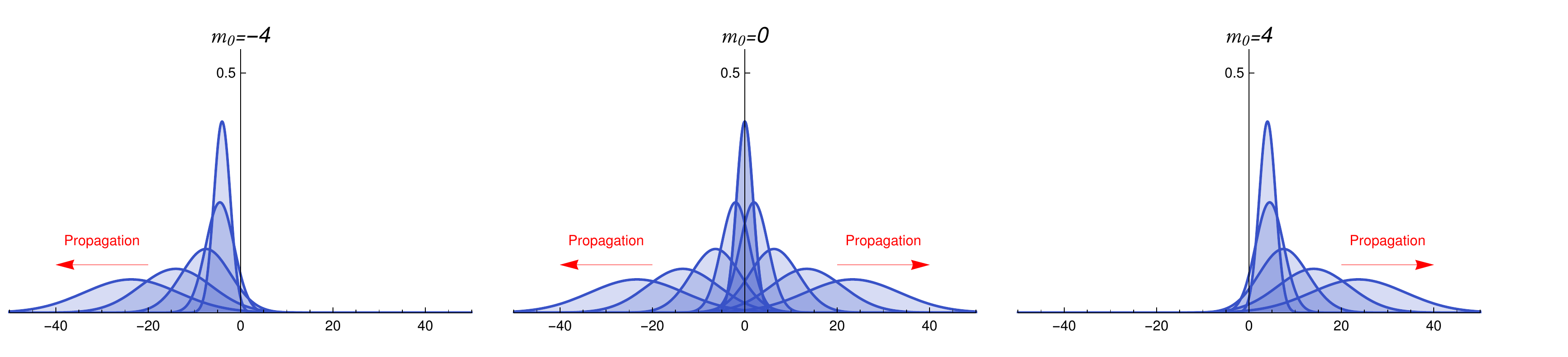}
    \caption{Evolution of Gaussian solutions for $\diffcst=1$,  $a_0=1$ and (from left to right) $m_0=-4$, $m_0=0$ and $m_0=4$.}
    \label{fig:gauss-centree} 
\end{figure}

\section{The  \textsc{CGF} approach}\label{s:cgf}

In this section, we assume that we are equipped with a solution of~\eqref{eq:principale} starting from $u_0\in \mathcal{A}$.  We def{}ine the  Cumulative Generating Function (\textsc{cgf}) 
\begin{equation}
    \label{eq:CGF-v-definition}
    C(t,z):= \ln\left(\int_\R u(t,x) e^{z x}\, dx  \right), \quad t\geq 0, z\geq 0,
\end{equation}
of such a solution, in the spirit of~\cite{Gil-17}.  From Def{}inition~\ref{def:sol}, $C(t,z)$ is well def{}ined and smooth on $(0,+\infty)\times [0,+\infty)$. We shall derive an implicit expression for $C(t,z)$, and then for the value $\overline u(t)$ of the mean. This crucial step will enable us to complete the analysis in the next section, which is much in the spirit of~\cite{Alf-Car-14}.

\medskip

The following computations are validated by the notion of solution adopted in Def{}inition~\ref{def:sol} and, possibly, the dominated convergence theorem. Observe that 
$$
\partial _t C(t,z)=\frac{\int _\R \partial _t u(t,x) e^{zx}dx}{\int _\R u(t,x)e^{zx}dx},
$$
and that
$$\partial _z C(t,z)=\frac{\int _\R x u(t,x) e^{zx}dx}{\int _\R u(t,x)e^{zx}dx},\quad \partial _{zz} C(t,z)=\frac{\int _\R x^{2} u(t,x) e^{zx}dx}{\int _\R u(t,x)e^{zx}dx}-\left(\frac{\int _\R x u(t,x) e^{zx}dx}{\int _\R u(t,x)e^{zx}dx}\right)^{2}.
$$
Notice in particular that the mean is reached through
$$
\partial _z C(t,z=0)=\int _\R xu(t,x)dx=\overline{u}(t),
$$
whereas the variance is reached through
$$
\partial _{zz} C(t,z=0)=\int _\R x^{2} u(t,x) dx-\left(\int _\R x u(t,x) dx\right)^{2}=V(t).
$$

Hence, multiplying equation~\eqref{eq:principale} by $e^{zx}$, integrating over $x\in\R$ and f{}inally dividing by $\int_\R u(t,x)e^{z x}$, we obtain the following nonlocal f{}irst order \textsc{pde}
\begin{equation}
\label{eq:CGFequation}
     \partial_t C(t,z) = \diffcst z^{2}-1+\frac{\partial _z C(t,z)}{\partial _z C(t,0)},
\end{equation}
where we have used integration by parts to get \[\int _\R \partial_{xx} u(t,x)e^{zx}dx=z^{2}\int _\R u(t,x)e^{zx}dx.\] Furthermore, since $\int_\R u(t,x) dx = 1$, the condition $C(t,0)=0$ must hold for any $t\geq0$. 
As a result we are facing the problem
\begin{equation}\label{eq:Cauchy-CGF}
    \begin{cases}
     \partial_t C(t,z) = \diffcst z^{2}-1+\displaystyle{\frac{\partial _z C(t,z)}{\overline{u} (t)}} &t\geq 0, z\geq 0,\\
    C(0,z) =C_0(z) &z\geq 0, \vspace{5pt}\\
    C(t,0)=0 &t\geq 0,
    \end{cases}
\end{equation}
where $C_0(z)=\ln\left(\int _\R u_0(x)e^{zx}dx\right)$ is the \textsc{cgf} of the initial data $u_0$. Notice that, from a straightforward computation, \[{C_0}''(z)=\frac{\left(\int _\R x^2u_0(x)e^{zx}dx\right)\left(\int _\R u_0(x)e^{zx}dx\right)-\left(\int _\R xu_0(x)e^{zx}dx\right)^{2}}{\left(\int _\R u_0(x)e^{zx}dx\right)^{2}}.\]
Hence, from Cauchy Schwarz inequality, we see that ${C_0}''(z)\geq 0$ for all $z\geq 0$, and even ${C_0}''(z)>0$ for all $z\geq 0$ if $u_0\not \equiv 0$. This convexity property of cumulant generating functions is well-known in probability, and will be used in the following.
\medskip

Fix $t> 0$ and $z> 0$. For  $s$ such that $-t \leq s$ and $\int_ 0^s \frac{ d\tau}{\overline{u}(t+\tau)}\leq z$, set \[\psi(s):=C\left(t+s,z-\int _0 ^s \frac{d\tau}{\overline{u}(t+\tau)} \right),\] which we dif{}ferentiate to get
\begin{eqnarray}
    \psi'(s)  &=& \partial_t C\left(t+s,z-\int _0 ^s \frac{ d\tau}{\overline{u}(t+\tau)}\right)-\frac{1}{\overline{u}(t+s)}\partial_z C\left(t+s,z-\int _0 ^s \frac{ d\tau}{\overline{u}(t+\tau)}\right)\nonumber\\
    &=&\diffcst \left(z-\int _0 ^s \frac{ d\tau}{\overline{u}(t+\tau)}\right)^{2}-1.\label{bidule}
\end{eqnarray}
As a result,
\begin{eqnarray*}
    C(t,z)-C_0 \left(z+\int _0^t \frac {d\tau}{\overline{u} (\tau)}\right)
   & =& C(t,z)-C_0\left(z+\int_{-t}^{0} \frac{ d\tau}{\overline{u}(t+\tau)}\right)\\& =&\psi(0)-\psi(-t)\\
    & =& \int_{-t}^0 \psi'(s)\, ds. 
    \end{eqnarray*}
From~\eqref{bidule}, we deduce that
    \begin{eqnarray*}
 &&C(t,z)-C_0 \left(z+\int _0^t \frac {d\tau}{\overline{u} (\tau)}\right)\\
 &&\quad = \diffcst \int _{-t}^0\left(z-{\int _0^s \frac{ d\tau}{\overline{u}(t+\tau)}}\right)^{2}\, ds-t\\
   &&\quad = \diffcst tz^2-2\diffcst z \int_{-t}^0{ \int _0^s \frac{ d\tau}{\overline{u}(t+\tau)}} ds+\diffcst \int_{-t}^0\left({\int_0^s\frac{ d\tau}{\overline{u}(t+\tau)}}\right)^{2}\,ds-t\\
    &&\quad = \diffcst tz^2+2\diffcst z \int _{-t}^0{\int _{-t}^\tau \frac{1}{\overline{u}(t+\tau)}\,ds}d\tau +\diffcst \int_{-t}^0\left({\int_0^s\frac{ d\tau}{\overline{u}(t+\tau)}}\right)^{2}\,ds-t,
    \end{eqnarray*}
  by Fubini-Tonelli theorem. We thus conclude that  
     \begin{eqnarray*}
     C(t,z)=C_0 \left(z+\int _0^t \frac {d\tau}{\overline{u} (\tau)}\right) + \diffcst tz^2+2\diffcst z \int_{0}^t \frac{y}{\overline{u}(y)}\,dy +\diffcst \int
       _{-t}^0\left({\int_0^s\frac{ d\tau}{\overline{u}(t+\tau)}}\right)^{2}\,ds-t.
\end{eqnarray*}
This is an implicit expression for $C(t,z)$ since it still involves $\overline{u}(t)=\partial_z C(t,0)$. Differentiating with respect to $z$ and evaluating at $z=0$, we reach another implicit formula for the mean value of the solution 
\begin{equation}
\label{eq:implicit-ubar}
\overline{u}(t)={C_0}'\left(\int _0^{t}\frac{dy}{\overline{u}(y)}\right)+2\diffcst  \int _{0}^t \frac{y}{\overline{u}(y)}\,dy,
\end{equation}
whereas  dif{}ferentiating twice with respect to $z$ and evaluating at $z=0$, we reach the variance of the solution
\begin{equation}
\label{eq:implicit-var}
V(t)={C_0}''\left(\int _0 ^t \frac{dy}{\overline{u}(y)}\right)+2\diffcst t\begin{cases}
\geq 2\diffcst t &\text{ in any case}\\
\sim 2\diffcst t &\text{ if } \sup {\rm supp}(u_0)<+\infty,
\end{cases}
\end{equation}
where the last estimate follows from~\cite[Lemma 4.5]{Gil-17}.

\medskip

Next, dif{}ferentiating~\eqref{eq:implicit-ubar} with respect to $t$, we get
\[{\overline{u}}\,'(t)=\frac{1}{\overline{u}(t)}{C_0}''\left(\int _0 ^t \frac{dy}{\overline{u}(y)}\right)+\frac{2\diffcst t}{\overline{u}(t)},\]
so that $\frac{d}{dt}\overline{u} ^2(t)=2V(t)$ and thus
\begin{equation}
\label{eq:asy-ubar}
\overline{u}(t) \begin{cases}\geq \sqrt{2\diffcst}\, t &\text{ in any case}\\
\sim \sqrt{2\diffcst}\, t &\text{ if } \sup {\rm supp}(u_0)<+\infty.
\end{cases}
\end{equation}
Also, integrating  $\frac{d}{dt}\overline{u} ^2(t)=2V(t)$, we collect the very useful expression
\begin{equation}
\label{eq:ubar-egal}
\overline{u}(t)=\sqrt{m_0^2+2\diffcst t^2+2\int_0^t {C_0}''\left(\int_0^s\frac{dy}{\overline{u} (y)}\right)\,ds}.
\end{equation}

\begin{rem}
Notice that if $u_0(x)=\sqrt{\frac{a_0}{2\pi}}e^{-a_0\frac{(x-m_0)^{2}}{2}}$ is a Gaussian initial data as in subsection~\ref{ss:gauss}, we know that its \textsc{cgf} is \[C_0(z)=m_0z+\frac 1{2a_0}z^2.\] Hence $\overline{u}(t)=\sqrt{m_0^2+2\diffcst t^2+\frac{2}{a_0}t}$  and $V(t)=\frac{1}{a_0}+2\diffcst t$, which agrees with the values of $m(t)$ and $\frac{1}{a(t)}$ in Proposition~\ref{prop:prop-gauss}. 
\end{rem}

\section{The solution implicitly/explicitly}\label{s:explicit}

In this section, we mainly complete the proof of Theorem~\ref{th:main} and of Corollary~\ref{cor:long}. As a by product, we present some numerical strategies for solving~\eqref{eq:principale}. 

\medskip
First, assume that we are equipped with a solution $u(t,x)$ of~\eqref{eq:principale} starting from $u_0\in \mathcal{A}$.   In particular we know from Section~\ref{s:cgf} that $\overline{u}(t)$ is given by~\eqref{eq:ubar-egal}. Following~\cite{Alf-Car-14},  we write
\begin{equation}
\label{eq:def-w}
   u(t,x) = w\left(t,x + 2 \int_0^t\int_0^s \frac{\diffcst}{\overline u(\tau)} d\tau ds\right)e^{-t+x\int_0^t  \frac{ ds }{\overline u(s)}
 +\int_0^t \diffcst \left(\int_0^s \frac{d\tau}{\overline u(\tau)} \right)^2  ds}.
\end{equation}
This clearly def{}ines a unique $w(t,y)$, $t\geq 0$, $y\in \R$, which has to satisfy
\begin{equation}
\label{eq:w-equation}
\begin{cases}
\partial _t w=\diffcst \partial _{yy} w\\ w_{\mid t=0}=u_0.
\end{cases}
\end{equation}
We refer to~\cite{Alf-Car-14} for more details on such a change of unknown function, which enables to reduce some replicator-mutator equations to the Heat equation.

Now, our main task is to show that the problem~\eqref{eq:ubar-egal}, is globally well-posed.

\begin{lem}\label{lem:pointFixe}
For any given $T>0$, there is a unique solution $\overline{u}\in\mathscr{C}^0([0,T])$ to \begin{equation}\label{eq:PointFixeEquation}
    \overline{u}(t)=\sqrt{m_0^2+2\diffcst t^2+2\int_0^t {C_0}''\left(\int_0^s\frac{dy}{\overline{u} (y)}\right)\,ds}.
\end{equation}
\end{lem}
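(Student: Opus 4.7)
The plan is to recast the equation as a fixed-point problem and to apply the Banach contraction principle. Define the operator
$$\Phi[\overline{u}](t) := \sqrt{m_0^2 + 2\diffcst t^2 + 2\int_0^t C_0''\!\left(\int_0^s \frac{dy}{\overline{u}(y)}\right) ds},$$
together with the closed subset
$$X_T := \{\overline{u} \in \mathscr{C}^0([0,T]) : \overline{u}(t) \geq m_0 \text{ for all } t \in [0,T]\},$$
which is a complete metric space under the uniform distance. Since $m_0 > 0$ by assumption on $u_0 \in \mathcal{A}$ and $C_0'' \geq 0$ on $[0,+\infty)$ (by Cauchy--Schwarz, as recalled in Section~\ref{s:cgf}), the radicand is bounded below by $m_0^2$, so $\Phi$ maps $X_T$ into itself. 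A solution of~\eqref{eq:PointFixeEquation} is precisely a fixed point of $\Phi$ in $X_T$.

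The crucial observation is that the lower bound $\overline{u}\geq m_0$ confines the inner argument: one has $0 \leq \int_0^s dy/\overline{u}(y) \leq T/m_0$ for every $s \in [0,T]$. Since $u_0$ has lighter-than-exponential tails, the moment generating function $\int_\R u_0(x)e^{zx}\,dx$ is finite and smooth (indeed real-analytic) on $[0,+\infty)$, and so is $C_0$. Thus $C_0''$ is bounded and Lipschitz on the compact interval $[0,T/m_0]$, with some constant $L = L(T) > 0$, and every nonlinearity appearing in $\Phi$ is effectively acting on a controlled range where routine Lipschitz bounds apply.

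For $\overline{u},\overline{v}\in X_T$, a chain of elementary estimates then gives
$$\left|\int_0^s \frac{dy}{\overline{u}(y)} - \int_0^s \frac{dy}{\overline{v}(y)}\right| \leq \frac{s}{m_0^2}\|\overline{u}-\overline{v}\|_\infty,$$
and, combining the Lipschitz bound on $C_0''$ with the inequality $|\sqrt{a}-\sqrt{b}| \leq |a-b|/(2m_0)$ (valid since $a,b\geq m_0^2$),
$$\|\Phi[\overline{u}]-\Phi[\overline{v}]\|_\infty \leq \frac{L\,T^2}{2 m_0^3}\|\overline{u}-\overline{v}\|_\infty.$$
For $T$ sufficiently small, this is a strict contraction and Banach's theorem furnishes a unique fixed point in $X_T$.

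To reach arbitrary $T > 0$, I would either (i) iterate the construction on successive intervals $[t_k, t_{k+1}]$ of a uniform length $T_\star > 0$ — the a priori lower bound $\overline{u}\geq m_0$ and the polynomial control of $L$ guarantee that the local contraction constant can be chosen independently of $k$ — or (ii) replace the uniform norm on $X_T$ by the weighted norm $\|\overline{u}\|_\lambda := \sup_{t\in[0,T]} e^{-\lambda t}|\overline{u}(t)|$, a standard trick to absorb the factor $T^2$ into a $\lambda$ large enough to yield a contraction on the whole interval. The main subtlety in the argument is handling the two nested nonlinearities — the outer square root and the pointwise inverse $1/\overline{u}$ — simultaneously; the strict positivity $m_0 > 0$ inherited by every iterate, together with the sign $C_0''\geq 0$, is exactly what is needed to keep both under control.
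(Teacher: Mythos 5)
Your proposal is correct and follows essentially the same route as the paper: the same space $\{h\in\mathscr C^0([0,T]):h\geq m_0\}$, the same operator, and the same chain of Lipschitz estimates (the $1/m_0^2$ bound on the inner integral, the Lipschitz bound on $C_0''$ over the compact range $[0,T/m_0]$ guaranteed by the super-exponential decay of $u_0$, and the $1/(2m_0)$ bound for the square root). The only divergence is the last step: where you contract for small $T$ and then continue in time (or switch to a Bielecki norm), the paper runs the Picard iteration directly and uses the iterated-integral factorial bound $\|q_{n+1}-q_n\|_{L^\infty(0,s)}\leq \|q_1-q_0\|_{L^\infty(0,T)}\,(kTs)^n/n!$ to get existence and uniqueness on all of $[0,T]$ in one pass --- a cosmetic rather than substantive difference.
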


\begin{proof}
Recall that $m_0>0$. We def{}ine the following subspace of $\mathscr{C}^0([0,T])$, equipped with the $L^\infty$ norm, \[X:=\{h\in \mathscr{C}^0([0,T]), h(t)\geq h(0)=m_0 \; \text{ for all } 0\leq t\leq T\},\]
which is a Banach space. Now, we def{}ine the operator $\mathcal L: h\in X\mapsto q \in X$, and $q$ is given by \[q(t):=\sqrt{m_0^2+2\sigma^2 t^2 + 2 \int_0^t C_0{''}\left(\int_0^s \frac{d\tau}{h(\tau)} \right)\, ds}.\]

The proof consists in mimicking that of the Banach f{}ixed point theorem. In order to prove existence, we introduce the sequence $(q_n)\in X^{\N}$ def{}ined inductively by 
\begin{equation}\label{eq:iterationsquence}
\begin{cases}
q_0(t)=m_0\\
q_{n+1}(t)=\sqrt{m_0^2+2\sigma^2 t^2 + 2 \int_0^t C_0{''}\left(\int_0^s \frac{d\tau}{q_n(\tau)} \right)\, ds}.
\end{cases}
\end{equation}
We have
\begin{align*}
    \abs{q_{n+1}(t)-q_n(t)} &=  \dfrac{\abs{q_{n+1}^2(t)-q_n^2(t)}}{q_{n+1}(t)+q_n(t)}\\
    %& \leq \frac{1}{2m_0}\abs{2 \int_0 ^t \left[C_0{''}\left( \int_0^s \frac{d\tau}{q_n(\tau)} \right) - C_0{''}\left( \int_0^s \frac{d\tau}{q_{n-1}(\tau)} %\right)\right]  \, ds}\\
    & \leq \frac{1}{m_0} \int_0 ^t \abs{C_0{''}\left( \int_0^s \frac{d\tau}{q_n(\tau)} \right) - C_0{''}\left( \int_0^s \frac{d\tau}{q_{n-1}(\tau)} \right)}  \, ds\\
    & \leq \frac{1}{m_0}{\lVert{{C_0}^{(3)}}\rVert}_{L^\infty(0,T)} \int_0^t \abs{\int_0^s \left( \frac{1}{q_n(\tau)}-\frac{1}{q_{n-1}(\tau)}\right) d\tau}  \, ds\\
    & \leq \frac{1}{m_0}{\lVert{{C_0}^{(3)}}\rVert}_{L^\infty(0,T)} \int_0^t  \int_0^s \frac{\abs{q_n(\tau)-q_{n-1}(\tau)}}{m_0^2}   \, d\tau\,ds\\
    & \leq \frac{1}{m_0^3}{\lVert{{C_0}^{(3)}}\rVert}_{L^\infty(0,T)} \int_0^t  \int_0^s \norme{q_n-q_{n-1}}_{L^\infty(0,s)}   \, d\tau\,ds\\
    %& \leq \frac{1}{m_0^3}{\lVert{{C_0}^{(3)}}\rVert}_{L^\infty(0,T)} \int_0^t\left[ \norme{q_n-q_{n-1}}_{L^\infty(0,s)}  \int_0^s \, d\tau  \right] \, ds\\
    & \leq \frac{1}{m_0^3}{\lVert{{C_0}^{(3)}}\rVert}_{L^\infty(0,T)} \int_0^t s\  \norme{q_n-q_{n-1}}_{L^\infty(0,s)}  \,ds\\
    & \leq k\, T \int_0^t \norme{q_n-q_{n-1}}_{L^\infty(0,s)} \,ds,
\end{align*}
 where $k:=\frac{1}{m_0^3} {\lVert{{C_0}^{(3)}}\rVert}_{L^\infty(0,T)}$. Then we straightforwardly prove by induction that, for any $n\geq 0$, any $0\leq s\leq T$,
 \begin{equation}\label{eq:propFactorial}
 {\norme{q_{n+1}-q_n}}_{L^\infty(0,s)} \leq {\norme{q_{1}-q_0}}_{L^\infty(0,T)}\frac{(k\, Ts)^n}{n!}.
 \end{equation}
 In particular, the series  $\sum {\norme{q_{n+1}-q_n}}_{L^\infty(0,T)}$ converges and therefore  $(q_n)$ converges uniformly in $\mathscr{C}^0(0,T)$ to some $q$ which is a f{}ixed point of $\mathcal L$.

As far as uniqueness is concerned, if $q$ and $\tilde q$ are two f{}ixed points of $\mathcal L$ then the argument to reach~\eqref{eq:propFactorial} also yields
$\Vert q - \tilde q\Vert _{L^\infty(0,T)}\leq \Vert q - \tilde q\Vert _{L^\infty(0,T)} \frac{(kT^{2})^{n}}{n!}$. Letting $n\to +\infty$ enforces $q\equiv \tilde q$.
\end{proof}

\begin{proof}[Completion of the proof of Theorem~\ref{th:main} and Corollary~\ref{cor:long}] Hence, any solution in the sense of Def{}inition~\ref{def:sol} has to be given by~\eqref{eq:def-w}, where $\overline u(t)$ is given by Lemma~\ref{lem:pointFixe}. Notice that, from~\eqref{eq:PointFixeEquation}, $\overline u$ has to be smooth.

Conversely, it is now a matter of straightforward computations --- based on~\eqref{eq:def-w},\eqref{eq:w-equation} and~\eqref{eq:PointFixeEquation}--- to check that this does provide the solution. Notice that, in particular, the initial data is understood in the sense of Def{}inition~\ref{def:sol} $(v)$ since, in the  Cauchy problem for the heat equation~\eqref{eq:w-equation}, the initial data is understood in the sense $w(t,\cdot)\to u_0$ in $L^1(\R)$ as $t\to 0$. This proves  Theorem~\ref{th:main}.

Last, the conclusions of Corollary~\ref{cor:long} have been collected in the course of Section~\ref{s:cgf}, except estimate~\eqref{eq:not-int} which will be obtained in the proof of Lemma~\ref{lem:ODE}, where its relevance will become much clearer.
\end{proof}

\noindent {\bf Numerical implications.}  Three  major dif{}f{}iculties  are encountered when setting up a numerical strategy for problem~\eqref{eq:principale}. The f{}irst one is the nonlocal nature of the equation. At this stage, two natural approaches can be considered: $i)$ use f{}inite differences and approximate the nonlocal term by a Riemann sum; $ii)$ apply a splitting method, computing alternatively the solution of the non-local term and the resulting local reaction-diffusion equation. 

The second dif{}f{}iculty is the unboundedness of the domain. To manage it, one usually imposes artificial boundary conditions and solves numerically the equation on a suf{}f{}iciently large domain, approximating the true solution by the latter. 

The third complexity comes from the propagation of the solution, at least linear in view of~\eqref{eq:asy-ubar}, making it dif{}f{}icult to track it over time.

\medskip

Previously, we gave an implicit/explicit construction of the solution $u=u(t,x)$ of the Cauchy problem associated with equation \eqref{eq:principale}, which actually provides a strategy for solving numerically the problem. 

The idea is to initially f{}ind  an approximation of the nonlocal term $\overline{u}(t)$ on a time interval $[0,T],\  T>0$. This step is performed using the f{}ixed-point iteration sequence~\eqref{eq:iterationsquence}, for which the estimate~\eqref{eq:propFactorial} provides a convergence rate. This requires in particular to  compute (analytically or numerically)  the cumulant generating function $C_0(z)$  of the initial datum $u_0(x)$.   

 The next step consists in calculating the solution $w=w(t,x)$ of the heat equation with initial datum $u_0(x)$ and evaluating it in the values indicated by formula~\eqref{eq:def-w}.  Obviously, in the case where an explicit solution $w=w(t,x)$ of the heat equation can be found, artificial boundary conditions for solving the heat equation are not needed, leading to a better numerical solution. On the other hand, in the case where no closed form for $w=w(t,x)$ is available, it is important to previously solve numerically the heat equation in a suf{}f{}iciently large spatial domain. For instance, if we  want to solve numerically~\eqref{eq:principale} in $[x_-,x_+]\times [0,T]\subset \R\times \R$, assuming $m_0>0$ (the case $m_0<0$ following by symmetry), then it is clear from~\eqref{eq:def-w} that the function $w$ needs to be evaluated in space up to $x=x_++ 2 \int_0^t\int_0^s \frac{\diffcst}{\overline u(\tau)} d\tau ds$ and since $\overline{u}(t)\geq m_0>0$, then  we should solve the heat equation at most in $[x_-,x_++\tfrac{\diffcst T^2}{m_0}]\times[0,T]$. 

\medskip

In f{}igure~\ref{fig:numericalsolution}, we have represented the numerical solution obtained by this method for the initial condition $u_0(x)=\mathds{1}_{[\tfrac{1}{2},\tfrac{3}{2}]}(x)$, so that $m_0=1$ and $C_0(z)=\ln\left(\frac{2 e^z \sinh(z/2)}{z} \right)$,
 and $\diffcst=1$;  in this case, $w$ is known to be given by  \[w(t,x)=\frac{1}{2} \left(\text{erf}\left(\frac{3-2 x}{4
   \sqrt{t}}\right)-\text{erf}\left(\frac{1-2 x}{4
   \sqrt{t}}\right)\right).\]

\begin{figure}
  \begin{subfigure}[b]{0.45\textwidth}
  \centering
    \includegraphics[scale=0.23]{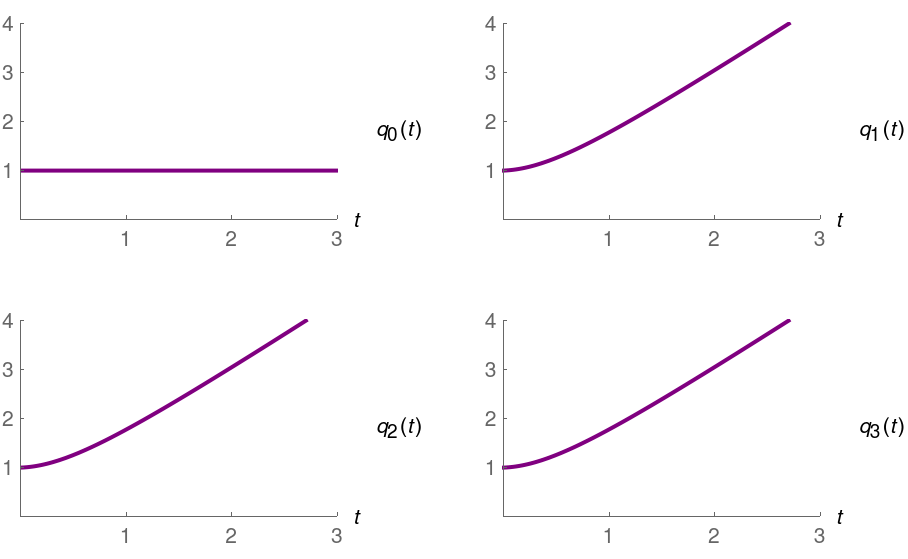}
    \caption{}
    \label{fig:2}
  \end{subfigure}
  \begin{subfigure}[b]{0.45\textwidth}
    \qquad \includegraphics[width=\textwidth]{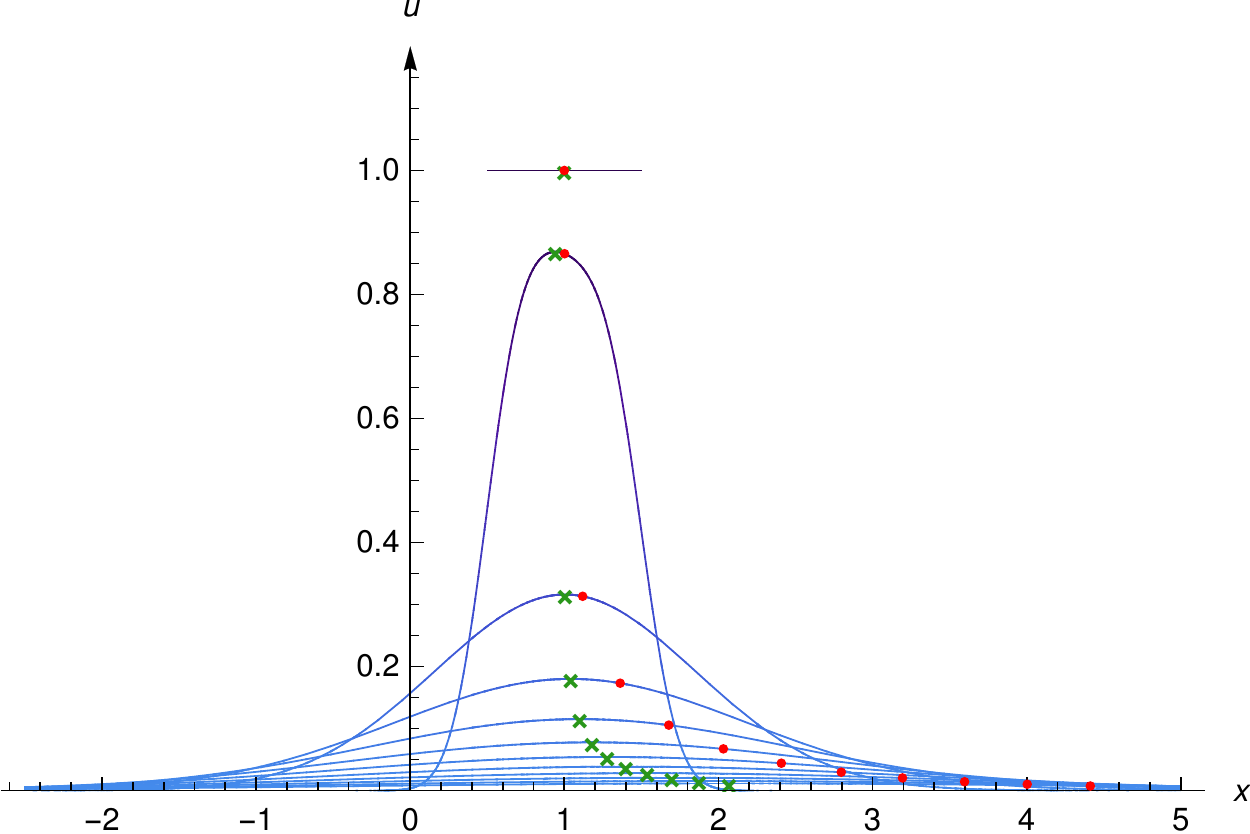}
    \caption{}
    \label{fig:numericalsolution}
  \end{subfigure}
\caption{(a): The f{}irst four approximations, for $0\leq t\leq 3$, of the nonlocal term $\overline{u}(t)$ computed via the f{}ixed point iteration~\eqref{eq:iterationsquence}. (b): Numerical solution obtained by the method described in Section~\ref{s:explicit}, starting from $u_0=\mathds{1}_{[1/2,3/2]}$, with $\diffcst=1$. The red points are the points on the graph $u(t,\cdot)$ with abscissa $x=\overline u(t)$.
The green points are the maxima of $u(t,\cdot)$. This reveals the dissymmetry of the solution.}
\end{figure}

\section{When the mutation coef{}f{}icient is socially determined}
\label{s:v-section}

This section is devoted to the analysis of equation~\eqref{eq:v-equation} supplemented with a non-negative initial data $v_0\in L^1(\R)$ satisfying $\int _\R v_0(x)\, dx=1$, and decreasing faster than any exponential data in the sense of~\eqref{eq:def-queues-legere}. We denote $m_0:=\int _\R xv_0(x)\,dx$. Interestingly, one can avoid the assumption $m_0>0$  in subsection~\ref{ss:gauss-v} and obtain anti-dif{}fusing/dif{}fusing solutions that are mathematically interesting. 

\subsection{Gaussian solutions: concentration vs. extinction}\label{ss:gauss-v}
 
\begin{prop}[Propagation of Gaussian initial data]\label{prop:prop-gauss-v} For $a_0>0$ and $m_0\in \R$. Then there is a unique Gaussian
\begin{equation}
\label{eq:gauss-v}
v(t,x):=\sqrt{\frac {a(t)}{2\pi}}e^{-a(t)\frac{(x-m(t))^2}2}
\end{equation}
which solves, at least locally in time,~\eqref{eq:v-equation} and starts from $u_0(x)=\sqrt{\frac {a_0}{2\pi}}e^{-a_0\frac{(x-m_0)^2}2}$. Its behaviour depends strongly on the parameters $a_0>0$, $m_0\in \R$ and $\sigma>0$.
\begin{enumerate}[$(i)$]
\item Assume $m_0<-\frac{1}{a_0\sqrt{2\diffcst}}$. Then $a(t)$ blows up in f{}inite time: there is  $0<T^\star<+\infty$ such that \[m(t)\to m(T^{\star})<0 \quad\text{ and }\quad  V(t):=\frac{1}{a(t)}\to 0, \text{ as } t\nearrow T^{\star}.\]

\item Assume $m_0=-\frac{1}{a_0\sqrt{2\diffcst}}$. Then $a(t)$ and $m(t)$ are global and \[
m(t)\nearrow 0 \quad\text{ and }\quad  V(t):=\frac{1}{a(t)}\to 0, \text{ as } t\to +\infty.\]

\item Assume $m_0>-\frac{1}{a_0\sqrt{2\diffcst}}$. Then $a(t)$ and $m(t)$ are global and \[
m(t)\sim C_me^{\sqrt{2\diffcst}t}\to +\infty \quad\text{ and }\quad  V(t):=\frac{1}{a(t)}\sim C_V e^{\sqrt{2\diffcst}t}\to +\infty, \text{ as } t\to +\infty,\] where $C_m:=\frac{2a_0 m_0 \sqrt{\diffcst}+\sqrt{2}}{4a_0\sqrt{\diffcst}}>0$ and $C_V:=\frac{2a_0 m_0 \sqrt{\diffcst}+\sqrt{2}}{2\sqrt 2 a_0}>0$.
\end{enumerate}
\end{prop}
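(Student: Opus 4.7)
The plan is to mimic the strategy of Proposition~\ref{prop:prop-gauss}: plug the Gaussian ansatz~\eqref{eq:gauss-v} into~\eqref{eq:v-equation}, use that $\overline{v}(t)=m(t)$, and identify the coefficients of $(x-m(t))^2$, $(x-m(t))^1$, $(x-m(t))^0$. Since the same computations as in Proposition~\ref{prop:prop-gauss} yield
\[\partial_tv = v\left[\tfrac{a'}{2a}-\tfrac{a'}{2}(x-m)^2+am'(x-m)\right],\quad \partial_{xx}v = v\left[-a+a^2(x-m)^2\right],\]
and $v(x-\overline v)=v(x-m)$, matching the $(x-m)^2$ coefficient gives $a'=-2\diffcst\, m\, a^2$, matching the linear coefficient gives $am'=1$, and the constant term is then automatic. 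So the Gaussian ansatz is an exact solution iff $(a,m)$ satisfies
\[a'(t)=-2\diffcst\, m(t)\, a^2(t),\qquad m'(t)=\tfrac{1}{a(t)},\]
with initial data $a(0)=a_0$, $m(0)=m_0$.

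The key observation is that this system linearises through the variance $b(t):=1/a(t)=V(t)$: one readily checks $b'=2\diffcst\, m$, so $m''=b'=2\diffcst\, m$. Thus $m$ solves the linear second-order ODE
\[m''(t)-2\diffcst\, m(t)=0,\qquad m(0)=m_0,\quad m'(0)=\tfrac{1}{a_0},\]
which integrates explicitly as
\[m(t)=A\, e^{\sigma\sqrt{2}\,t}+B\, e^{-\sigma\sqrt 2\,t},\qquad A=\tfrac{1}{2}\bigl(m_0+\tfrac{1}{a_0\sigma\sqrt{2}}\bigr),\quad B=\tfrac{1}{2}\bigl(m_0-\tfrac{1}{a_0\sigma\sqrt{2}}\bigr),\]
and $b(t)=m'(t)=\sigma\sqrt{2}\bigl(A\, e^{\sigma\sqrt 2\, t}-B\, e^{-\sigma\sqrt{2}\, t}\bigr)$. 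Local existence and uniqueness of the Gaussian solution follow from Cauchy--Lipschitz applied to $(a,m)$ as long as $a$ stays bounded and positive, i.e.\ as long as $b>0$.

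The trichotomy is then governed by the sign of $A$, which reproduces exactly the threshold $m_0=-1/(a_0\sigma\sqrt 2)=-1/(a_0\sqrt{2\diffcst})$. In case $(iii)$ one has $A>0$; since the identity $b(0)=1/a_0>0$ forces $A>B$ in every case, $b(t)$ stays positive for all $t\geq 0$, the solution is global, and the asymptotics $m(t)\sim A\, e^{\sigma\sqrt 2\, t}$, $V(t)=b(t)\sim \sigma\sqrt{2}\, A\, e^{\sigma\sqrt 2\, t}$ yield the stated constants $C_m$ and $C_V$ after rewriting $\sigma=\sqrt{\diffcst}$. In case $(ii)$, $A=0$ reduces the expressions to $m(t)=B e^{-\sigma\sqrt 2\, t}$ with $B<0$ and $b(t)=-\sigma\sqrt 2\, B\, e^{-\sigma\sqrt 2\, t}$, immediately giving $m(t)\nearrow 0$ and $V(t)\searrow 0$.

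The main (and really the only) subtlety is case $(i)$, $A<0$: then also $B<0$ with $|B|>|A|$, so $b$ is decreasing near $t=0$ and vanishes at the unique time
\[T^\star:=\tfrac{1}{2\sigma\sqrt 2}\ln(B/A)>0,\]
where $a=1/b$ blows up and $V=b\searrow 0$. At $T^\star$ the relation $A e^{\sigma\sqrt 2\, T^\star}=B e^{-\sigma\sqrt 2\, T^\star}$ gives $m(T^\star)=2A e^{\sigma\sqrt 2\, T^\star}<0$, as claimed. The Cauchy--Lipschitz argument confirms that $(a,m)$ extends smoothly up to $T^\star$, and that no continuation beyond $T^\star$ exists in the Gaussian class, which completes the proof.
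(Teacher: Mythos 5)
Your proposal is correct and follows essentially the same route as the paper: plug in the Gaussian ansatz to get the system $m'=1/a$, $a'=-2\diffcst a^2m$, linearise via $m''=2\diffcst m$, solve explicitly, and read off the trichotomy from the sign of the coefficient of $e^{\sqrt{2\diffcst}\,t}$ (your $A$ and $B$ are just a reparametrisation of the paper's explicit constants, and your $b=1/a=m'$ is the paper's step of recovering $a$ from the first equation). The constants, the threshold, the blow-up time $T^\star$ and the limits all check out against the paper's formulas.
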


\begin{rem} In Figure~\ref{fig:RegionsVbarLimit}, we have subdivided the phase plane $(m_0,a_0)$ into regions corresponding to the cases $(i),(ii)$ and $(iii)$.

Case $(i)$  corresponds to a concentration phenomena in f{}inite time:  the Gaussian solution converges, in f{}inite time, to a Dirac mass centered at  $x=m(T^\star)<0$. See Figure~\ref{fig:concentrationFiniteTime}.

Case $(ii)$  corresponds to a concentration phenomena in inf{}inite time:  the Gaussian solution converges, at large times, to a Dirac mass centered at  $x=0$. See Figure~\ref{fig:concentrationInfiniteTime}.

In case $(iii)$, if $m_0<0$ then, in contrast with cases $(i)$ and $(ii)$, the mean of the solution manages to \lq\lq cross'' the value zero. In case $(iii)$ convergence to a accelerating ($m(t)\sim C_m e^{\sqrt{2\diffcst}t}$) and flattening ($V(t)\sim C_Ve^{\sqrt{2\diffcst}t}$) prof{}ile always occurs at large times. Moreover, the variance of the solution is decreasing before the mean reaches zero, and then starts to increase after the mean crosses zero, which reveals an anti-dif{}fusion/dif{}fusion phenomenon, see Figure~\ref{fig:focusinDefocusing}. On the other hand, if $m_0\geq0$, the solution does not \emph{anti-dif{}fuse} and only dif{}fuses while the mean tends to inf{}inity, see Figure~\ref{fig:accelerating}.
\end{rem}

\begin{figure}
    \centering
    \includegraphics[width=0.5\textwidth]{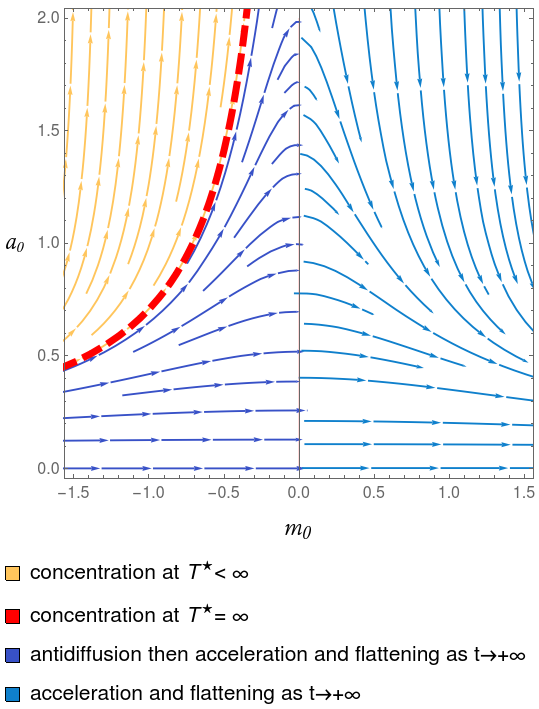}
    \caption{Vector f{}ield def{}ined by the dif{}ferential system~\eqref{eq:Zad-meanLaplacian} with $\diffcst=1$, describing the dynamics of Gaussian solutions. In yellow, the set of initial conditions for which $a$ blows up in f{}inite time $T^{\star}$ and  in red, dark blue and light blue those for which both $a$ and $m$ are globally def{}ined. The red dashed curve is the set of values  def{}ined by $m_0=-1/{(a_0\sqrt{2\diffcst})}$, for which $a$ tends to inf{}inity and $m$ tends to zero as time goes to inf{}inity. The dark blue region corresponds to the values leading to an anti-dif{}fusion/dif{}fusion behaviour. The light blue region corresponds to the values leading to a pure dif{}fusion behaviour.}
    \label{fig:RegionsVbarLimit}
\end{figure}

\begin{proof} As in the proof of Proposition~\ref{prop:prop-gauss}, we plug the ansatz~\eqref{eq:gauss-v} into~\eqref{eq:v-equation} and arrive at
\begin{equation}\label{eq:Zad-meanLaplacian}
    \begin{cases}
    m'(t) & = \frac{1}{a(t) }\\
    a'(t) & = -2 \diffcst a(t)^2m(t),
    \end{cases}
\end{equation}
so that $m''(t)=2\diffcst m(t)$, $m(0)=m_0$, $m'(0)=\frac{1}{a_0}$ which is globally solved as 
\begin{equation}\label{eq:sol-m-v}
    m(t)=\frac{\sqrt{2}+2a_0 m_0 \sqrt{\diffcst}}{4a_0\sqrt{\diffcst}}e^{\sqrt{2\diffcst}t}-\frac{\sqrt{2}-2a_0 m_0 \sqrt{\diffcst}}{4a_0\sqrt{\diffcst}}e^{-\sqrt{2\diffcst}t}.
\end{equation}
From the f{}irst equation in~\eqref{eq:Zad-meanLaplacian} we deduce 
\begin{equation}\label{eq:sol-a-v}
 a(t)=\frac{2\sqrt{2}\, a_0 }{(\sqrt{2}+2 a_0 m_0 \sqrt{\diffcst})e^{\sqrt{2\diffcst} t}+(\sqrt{2}-2 a_0 m_0\sqrt{\diffcst})e^{-\sqrt{2\diffcst} t}},
\end{equation}
as long as blow-up does not occur. We now easily see that in case $(i)$ blow up occurs at time \[T^\star=\frac{1}{2\sqrt{2\diffcst}}\ln\left(\frac{2  a_0 m_0 \sqrt{\diffcst}-\sqrt{2}}{2  a_0 m_0 \sqrt{\diffcst}+\sqrt{2}}\right)\in(0,+\infty).\] In case $(ii)$ blow up does not occur, $m(t)=m_0e^{-\sqrt{2\diffcst}t}$ and $a(t)=a_0e^{\sqrt{2\diffcst}t}$. In case $(iii)$ blow up does not occur and conclusion follows from~\eqref{eq:sol-m-v} and~\eqref{eq:sol-a-v}.
\end{proof}

\begin{figure}
    \centering
    \includegraphics[width=0.62\textwidth]{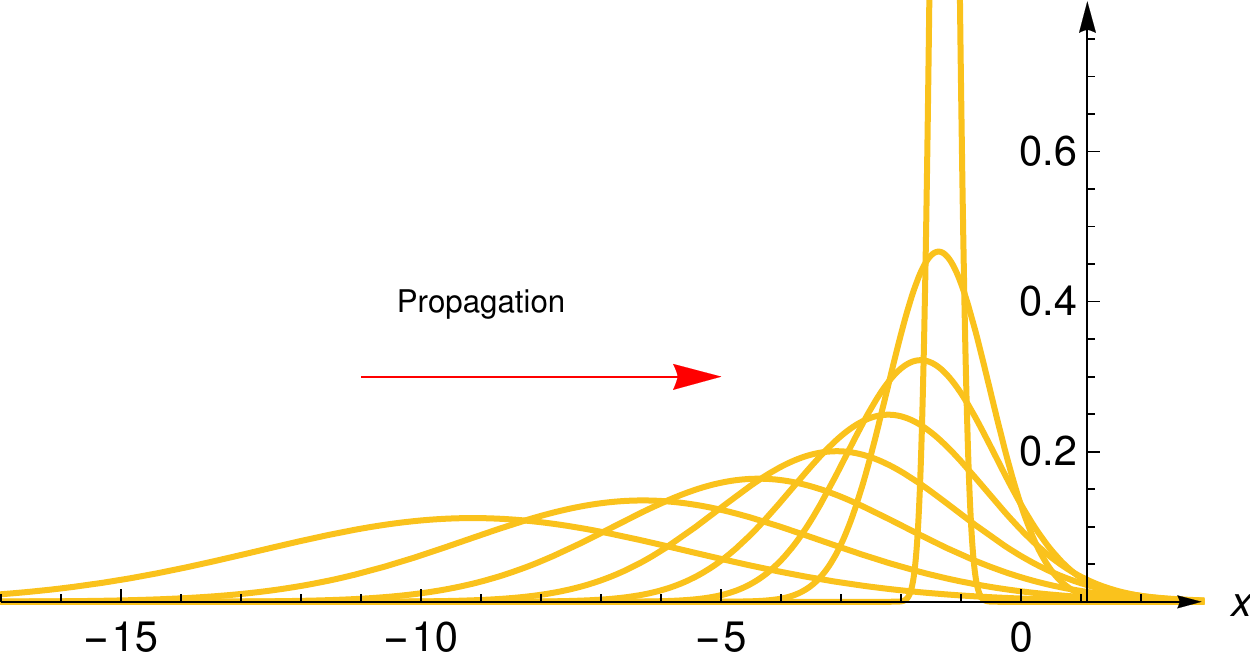}
    \caption{Case $(i)$ concentration in f{}inite time.  The values of the parameters are $a_0=5/64$, $m_0=-585/64$, $\diffcst=1$. It follows that $T^\star\approx 1.878$ and $m(T^\star)\approx -1.277<0$.}
    \label{fig:concentrationFiniteTime}
\end{figure}

\begin{figure}
    \centering
    \includegraphics[width=0.62\textwidth]{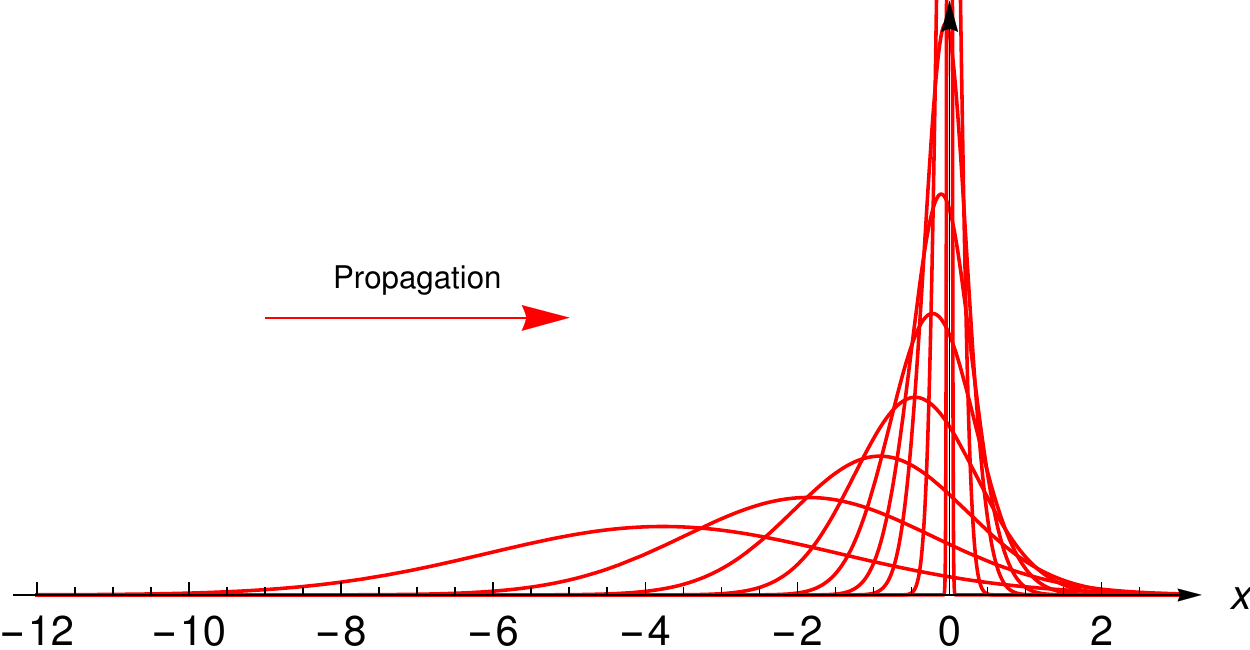}
    \caption{Case $(ii)$ concentration in inf{}inite time: the solution converges to a Dirac mass at zero. The values of the parameters are $a_0=3/16$, $m_0=-8\sqrt{2}/3$, $\diffcst=1$.}
    \label{fig:concentrationInfiniteTime}
\end{figure}

\begin{figure}
    \centering
    \includegraphics[width=0.7\textwidth]{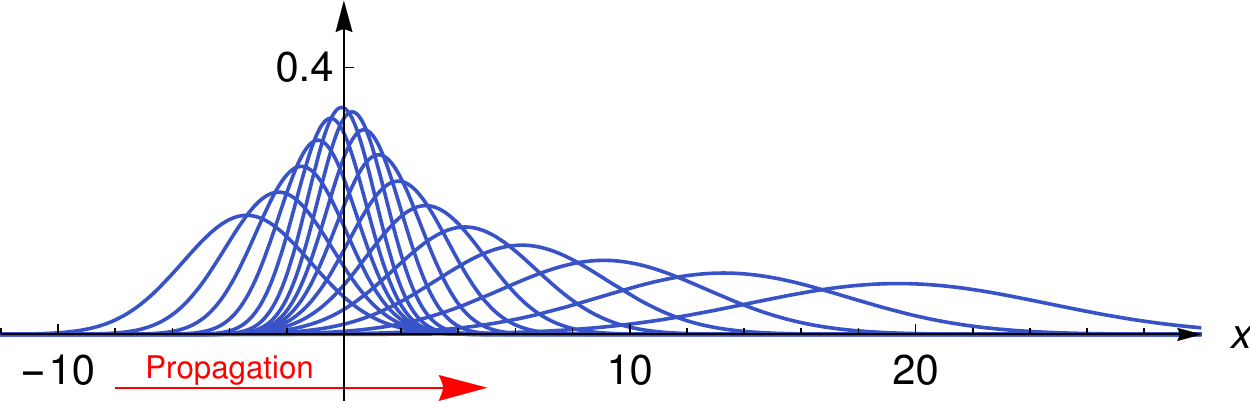}
    \caption{Case $(iii)$ with $m_0<0$, anti-dif{}fusion/dif{}fusion behaviour. The values of the parameters are $a_0=2/10$, $m_0=-34/10$, $\diffcst=1$.}
    \label{fig:focusinDefocusing}
\end{figure}

\begin{figure}
    \centering
    \includegraphics[width=0.7\textwidth]{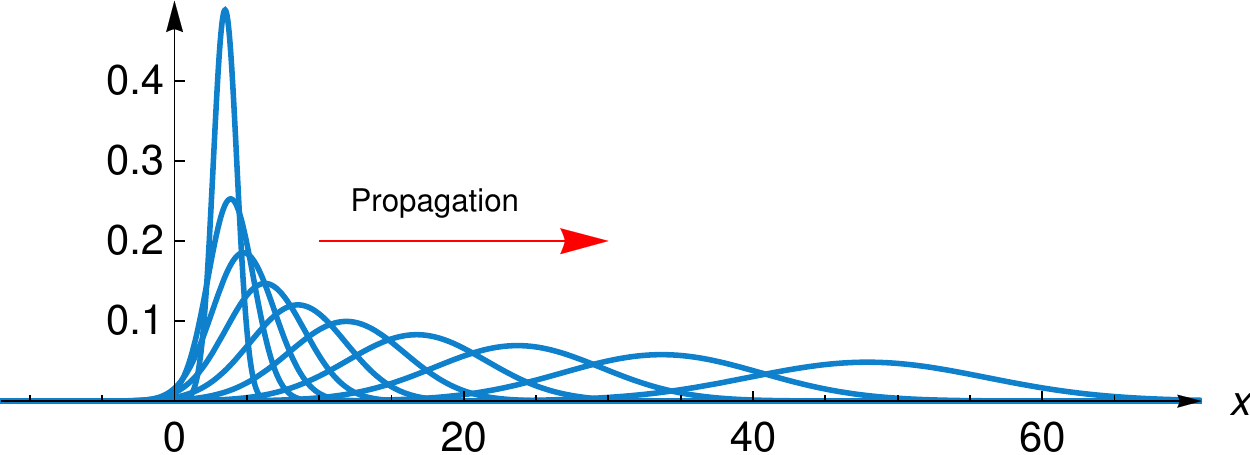}
    \caption{Case $(iii)$ with $m_0\geq 0$, the solution is flattening and accelerating. The values of the parameters are $a_0=3/2$, $m_0=7/2$, $\diffcst=1$.}
    \label{fig:accelerating}
\end{figure}

\subsection{General solutions}\label{ss:general-v}

Here we consider a non-negative initial data $v_0\in L^1(\R)$ satisfying $\int_\R v_0(x)\,dx=1$, \eqref{eq:def-queues-legere} and $m_0:=\int _\R xv_0(x)\,dx>0$, and investigate possible solutions $v=v(t,x)$ of~\eqref{eq:v-equation} starting from $v_0$. From Theorem~\ref{th:main}, we are equipped with the unique solution $u=u(t,x)$ of equation~\eqref{eq:principale} starting from $v_0$. Roughly speaking, the understanding of~\eqref{eq:v-equation} now reduces to that of the \textsc{o.d.e.} Cauchy problem
\begin{equation}\label{eq:ode}
\begin{cases}
\varphi'(t)  =\ubar(\varphi(t)) \text{ for }t>0,\\
\varphi(0)  = 0.
\end{cases} 
\end{equation}
Indeed, def{}ining 
\begin{equation}\label{eq:v-u}
v(t,x):=u(\varphi(t),x),
\end{equation}
we have $\overline v(t)=\overline u (\varphi(t))$ and one checks that, since $u$ solves~\eqref{eq:principale} (and starts from $v_0$), $v=v(t,x)$ solves~\eqref{eq:v-equation} (and starts from $v_0$) as long as the solution $\varphi(t)$ of~\eqref{eq:ode} exists.

\begin{lem}\label{lem:ODE} The solution $\varphi(t)$ of the \textsc{o.d.e.} Cauchy problem~\eqref{eq:ode} is global.
\end{lem}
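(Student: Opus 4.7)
The overall strategy is to translate global existence into a statement about the non-integrability of $1/\overline u$, which is precisely the outstanding estimate~\eqref{eq:not-int} from Corollary~\ref{cor:long} deferred to this proof. Since Section~\ref{s:cgf} established that $\overline u\in\mathscr C^\infty([0,+\infty))$ with $\overline u(0)=m_0>0$ and $\overline u(t)\geq\sqrt{2\sigma^2}\,t$, the Cauchy-Lipschitz theorem applied to~\eqref{eq:ode} supplies a unique maximal solution $\varphi:[0,T^\star)\to[0,+\infty)$; moreover $\varphi$ is strictly increasing because $\varphi'=\overline u\circ\varphi>0$. The task is thus to rule out $T^\star<+\infty$.

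First, I would separate variables in~\eqref{eq:ode} and integrate to obtain
\[\int_0^{\varphi(t)}\frac{dy}{\overline u(y)}=t,\qquad t\in[0,T^\star).\]
The scalar blow-up alternative forces $\varphi(t)\to+\infty$ as $t\nearrow T^\star$ whenever $T^\star<+\infty$, so passing to the limit yields $T^\star=\int_0^{+\infty}\frac{dy}{\overline u(y)}$. Consequently, proving~\eqref{eq:not-int} and proving $T^\star=+\infty$ are the same task.

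The heart of the proof is therefore~\eqref{eq:not-int}, which I would argue by contradiction. Suppose $L:=\int_0^{+\infty}\frac{dy}{\overline u(y)}<+\infty$. Because $u_0\in\mathcal A$ has tails lighter than any exponential, all moments $\int_\R x^k u_0(x)e^{zx}\,dx$ are finite and smooth in $z\geq 0$, so $C_0\in\mathscr C^\infty([0,+\infty))$; in particular $M:=\max_{[0,L]}C_0''<+\infty$. Since the map $s\mapsto\int_0^s dy/\overline u(y)$ is nondecreasing and bounded above by $L$, the identity~\eqref{eq:ubar-egal} yields
\[\overline u(t)^2\leq m_0^2+2\sigma^2 t^2+2Mt,\]
hence $\overline u(t)\leq K(1+t)$ for some constant $K>0$. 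But then $\int_0^{+\infty}\frac{dy}{\overline u(y)}\geq\int_0^{+\infty}\frac{dy}{K(1+y)}=+\infty$, contradicting $L<+\infty$.

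The only subtle point is the self-referential structure of~\eqref{eq:ubar-egal}, in which $\overline u$ is coupled to its own antiderivative. The argument exploits this feedback in one direction: hypothetical finiteness of the antiderivative of $1/\overline u$ controls $\overline u$ from above by a linear function of $t$, which in turn forces the antiderivative to diverge. I do not expect any further obstacle beyond verifying smoothness of $C_0$ under the tail condition~\eqref{eq:def-queues-legere}, which is routine.
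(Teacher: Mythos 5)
Your proof is correct and follows essentially the same route as the paper: assume finite-time blow-up, identify the maximal existence time with $\int_0^{+\infty}\frac{dy}{\overline u(y)}$, use the boundedness of the argument of a derivative of $C_0$ to deduce that $\overline u(t)$ grows at most linearly, and conclude that $1/\overline u$ cannot be integrable. The only (inessential) difference is that you extract the linear bound from~\eqref{eq:ubar-egal} via $C_0''$, whereas the paper uses~\eqref{eq:implicit-ubar} via $C_0'$ together with the bound $y/\overline u(y)\leq 1/\sqrt{2\diffcst}$.
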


\begin{proof} If $\varphi$ blows up in f{}inite time, say at time $T>0$, then we obtain from the \textsc{o.d.e.} that \[\int _0^{+\infty}\frac{dy}{\overline u(y)}=T.\]
Hence we deduce from~\eqref{eq:implicit-ubar} that \[\overline u(t)\leq \Vert {C_0}'\Vert _{L^{\infty}(0,T)}+2\diffcst \int _0 ^{t}\frac{y}{\overline u (y)}dy.\] But, from~\eqref{eq:ubar-egal}, we know that $\frac y{\overline u(y)}\leq \frac{1}{\sqrt{2\diffcst}}$ and thus \[
\overline u(t)\leq \Vert {C_0}'\Vert _{L^{\infty}(0,T)}+\sqrt{2\diffcst}\,t,\]
which contradicts $\frac{1}{\overline u}\in L^{1}(0,+\infty)$. This concludes the proof of the lemma and, clearly, of estimate~\eqref{eq:not-int}. 
\end{proof}

Since $\overline u$ is smooth, so is $\varphi$. Also, $\varphi'(t)\geq m_0>0$ and, in particular, $\varphi(t)$ tends to $+\infty$ as $t\to+\infty$. Hence $\varphi:(0,+\infty)\rightarrow(0,+\infty)$ is a smooth dif{}feomorphism. In other words,~\eqref{eq:v-u} shows that there is a one-to-one correspondence between solutions to~\eqref{eq:v-equation} and that of~\eqref{eq:principale}. We omit the full details but this clearly concludes the analysis of the Cauchy problem associated with~\eqref{eq:v-equation}, whose surprising qualitative behaviours have already been underlined in subsection~\ref{ss:gauss-v}.

\bigskip

\noindent{\bf Acknowledgements.} The authors are very grateful to Anton Zadorin for sharing his expertise on the models appearing in \cite{Zad-17}, and for a precise relecture of the preprint, in particular the introduction. M. Alfaro is supported by the 
ANR \textsc{i-site muse}, project \textsc{michel} 170544IA (n$^{\circ}$ ANR \textsc{idex}-0006). M. Veruete is supported by the National Council for Science and Technology of Mexico.

\bibliographystyle{siam}  
\bibliography{biblio}
%\tableofcontents

\end{document}